\documentclass[11pt]{article}

\usepackage[margin=1in]{geometry}
\usepackage[hidelinks]{hyperref}
\hypersetup{
  pdftitle={Galois-Orbit Structure, Ramanujan Sums, and Stable-Range Collapse for Cyclotomic Cosine Formulas},
  pdfauthor={Juan D. Velez and Carlos A. Cadavid},
  pdfkeywords={cyclotomic cosine sums, Galois orbits, stable range, Ramanujan sums, compatible polynomial families}
}
\usepackage{microtype}
\usepackage{xcolor}
\usepackage{comment}

\usepackage{amsmath,amssymb,amsthm,mathtools}
\usepackage{enumitem}

\usepackage[nameinlink,noabbrev]{cleveref}

\theoremstyle{plain}
\newtheorem{theorem}{Theorem}[section]
\newtheorem{proposition}[theorem]{Proposition}
\newtheorem{corollary}[theorem]{Corollary}
\newtheorem{lemma}[theorem]{Lemma}

\theoremstyle{definition}
\newtheorem{definition}[theorem]{Definition}

\theoremstyle{remark}
\newtheorem{remark}[theorem]{Remark}

\theoremstyle{definition}
\newtheorem{example}[theorem]{Example}

\newcommand{\Q}{\mathbb{Q}}
\newcommand{\Z}{\mathbb{Z}}

\newcommand{\Gal}{\mathrm{Gal}}

\title{Galois-Orbit Structure, Ramanujan Sums, and Stable-Range Collapse for Cyclotomic Cosine Formulas}
\author{%
Juan D.\ V\'elez\thanks{Corresponding author.}\\
{\small Departamento de Matem\'aticas, Sede Medell\'in, Universidad Nacional de Colombia, Medell\'in, Colombia}\\
{\small \href{mailto:jdvelez@unal.edu.co}{\texttt{jdvelez@unal.edu.co}}}
\and
Carlos A.\ Cadavid\\
{\small Departamento de Matem\'aticas, Universidad EAFIT, Medell\'in, Colombia}\\
{\small \href{mailto:ccadavid@eafit.edu.co}{\texttt{ccadavid@eafit.edu.co}}}%
}
\date{September 2026}

\begin{document}

\maketitle

\begin{abstract}
We study cyclotomic cosine evaluations beyond the fully symmetric setting by combining Galois-orbit structure with finite-frequency expansions. First, for truncation-compatible polynomial families over \(\Q\), we distinguish invariance of the evaluated value from invariance of the polynomial formula. Summability by a rational function of the level forces eventual rationality of the evaluated values. Conversely, every rational-valued sequence can be realized by a compatible family whose total degree is uniformly bounded by two, showing that value-level Galois invariance alone carries essentially no stable-range rigidity. We then introduce explicit orbit-structured pattern sums indexed by multiplicative configurations. For every fixed pattern, at each sufficiently large admissible level \(n\), its cyclotomic cosine evaluation collapses to an affine function
\[
n\,\mathsf S_0(\mathbf r,\mathbf h)-2^{|\mathbf h|},
\]
where \(\mathsf S_0\) is the zero-frequency coefficient of the associated Laurent polynomial. Consequently, every family uniformly generated from finitely many such patterns by a single polynomial recipe has eventually polynomial evaluation. Finally, decomposing the punctured index set into gcd-orbits yields an exact orbitwise formula in terms of Ramanujan sums. The semiprime and prime-power cases become transparent specializations of this general identity. These results identify both the obstruction to extending symmetric rigidity from value-level Galois invariance and a concrete non-symmetric class exhibiting uniform stable-range collapse.

\medskip
\noindent\textbf{Keywords.} Cyclotomic cosine sums; Galois orbits; stable range; Ramanujan sums; compatible polynomial families.

\smallskip
\noindent\textbf{2020 Mathematics Subject Classification.} 11R18 (primary); 11L03, 11B83, 05A19 (secondary).
\end{abstract}

\section{Introduction}
\label{sec:introduction}

In \cite{VelezCadavidPartI} we introduced an inverse-limit formalism for truncation-compatible polynomial families evaluated at the punctured cyclotomic cosine points
\[
\alpha_{k,n}=\cos\!\Bigl(\frac{2\pi k}{n}\Bigr),
\qquad 1\le k\le n-1,
\]
with the level specialization \(z=n-1\). The guiding principle of that paper was that an \(n\)-dependent cyclotomic construction should not be treated as a collection of unrelated fixed-level expressions, but rather as a single compatible algebraic object. Within that framework, bounded-degree symmetry forced a strong stabilization phenomenon: if \(F=(F_m)_{m\ge1}\) is truncation-compatible, symmetric, and of uniformly bounded total \(x\)-degree, then for all sufficiently large \(n\) its cosine-point evaluation factors through finitely many universal punctured cosine power sums
\[
P_h(n)=\sum_{k=1}^{n-1}(2\alpha_{k,n})^h.
\]
In particular, one obtains eventual polynomiality of the corresponding evaluations. We refer to \cite{VelezCadavidPartI} for the precise inverse-limit framework, the stable-range rigidity theorem, and the extensions there to product-structured families. For general background on cyclotomic fields, see \cite{Washington1997}; related evaluations of trigonometric character sums by discrete heat-kernel methods appear in \cite{CHJSV23}.

The present paper begins from that structural picture and asks how much of it survives once one moves beyond the symmetric regime. In Part~I, the decisive hypothesis was full permutation symmetry in the variables
\[
(x_1,\dots,x_{n-1}),
\]
which made the stable-range reduction to finitely many additive invariants possible. But the punctured cyclotomic cosine configuration also carries a second symmetry, arithmetic rather than combinatorial in nature: the action of the cyclotomic Galois group
\[
G_n=\Gal(\Q(\omega_n)/\Q)\cong (\Z/n\Z)^\ast,
\qquad \omega_n=e^{2\pi i/n},
\]
which acts on the cosine points by
\[
\sigma_r(\alpha_{k,n})=\alpha_{rk,n},
\]
with the index understood modulo \(n\). The central question of this sequel is therefore the following:

\medskip

\noindent\emph{What remains of the summability theory of \cite{VelezCadavidPartI} once full symmetric-group symmetry is abandoned and one retains only orbit structure arising from the cyclotomic Galois action?}

\medskip

Beyond full symmetry, one must distinguish two notions that are largely invisible in the symmetric theory because there the formula-level structure is so strong. For a truncation-compatible family \(F=(F_n)_{n\ge1}\), one may ask whether the evaluated value
\[
a_F(n)=F_{n-1}(\alpha_{1,n},\dots,\alpha_{n-1,n})
\]
is fixed by \(G_n\), or instead whether the polynomial
\[
F_{n-1}(x_1,\dots,x_{n-1})
\]
itself is invariant under the induced permutation action of \(G_n\) on the punctured index set \(\{1,\dots,n-1\}\). These are very different conditions. The first is an arithmetic statement about the field of definition of the evaluated number; the second is a structural statement about the formula before evaluation. One of the main points of the present paper is that, once full symmetry is removed, this distinction becomes decisive.

The paper has two complementary parts.

The first part is negative and remains in the global compatible-family framework developed in Part~I. We prove that summability by a rational function of the level still has an unavoidable arithmetic shadow: if a compatible family is eventually equal, at cyclotomic cosine points, to a rational function \(R(n)\in\Q(n)\), then for all sufficiently large \(n\) its evaluated value is fixed by \(G_n\). Equivalently, eventual summability forces eventual rationality of the evaluated values. However, this necessary condition is far from sufficient. Indeed, we prove that every rational-valued sequence \((b_n)_{n\ge2}\) can be realized as
\[
b_n=F_{n-1}(\alpha_{1,n},\dots,\alpha_{n-1,n})
\]
for some truncation-compatible family \(F\), and in fact this can be done with \(\deg F_m\le 2\) for every \(m\). Thus value-level \(G_n\)-invariance, although necessary for summability, is far too weak to imply any analogue of the stable-range rigidity theorem of \cite{VelezCadavidPartI}, even under a uniform quadratic degree bound.

The second part of the paper is positive but is formulated in a different framework. Instead of working with one compatible family across all integer levels, we pass to explicit orbit-structured pattern sums indexed by multiplicative configurations. The negative results show that no meaningful rigidity theory can be recovered from value-level Galois invariance alone; one must instead impose structural orbit conditions directly on the formulas.

For a fixed pattern \((\mathbf r,\mathbf h)\), the total cyclotomic cosine evaluation can be analyzed directly, without first decomposing into \(G_n\)-orbits. After binomial expansion, the evaluation reduces to finitely many geometric sums indexed by integer frequencies. In the stable range, namely when the level \(n\) is larger than the maximal possible absolute frequency and when all multipliers \(r_j\) are invertible modulo \(n\), every nonzero frequency is too small to be divisible by \(n\). As a result, every nonzero mode contributes the same constant \(-1\), while the zero mode contributes \(n-1\). This yields the all-level collapse formula
\[
T^{(n)}_{\mathbf r,\mathbf h}(\alpha_{1,n},\dots,\alpha_{n-1,n})
=
n\,\mathsf S_0(\mathbf r,\mathbf h)-2^{|\mathbf h|}
\]
for every sufficiently large admissible level \(n\). Thus the prime case is not the endpoint of the theory, but only the first visible instance of a broader stable-range phenomenon.

From this general collapse theorem one immediately obtains eventual polynomiality for every family uniformly generated from finitely many such pattern sums through a single polynomial recipe. The resulting positive theory is therefore stronger than a prime-tail statement: it applies to all sufficiently large admissible levels \(n\), not merely to primes or to special factorization types.

The orbitwise analysis leads naturally to the classical Ramanujan sums
\[
c_q(M)=\sum_{u\in(\Z/q\Z)^\ast}e^{2\pi iMu/q},
\]
introduced by Ramanujan in his study of trigonometrical sums \cite{Ramanujan1918} and developed further in the classical literature; see, for example, \cite{Hardy1921,Apostol1976,McCarthy1986,SchwarzSpilker1994}. Their close connection with cyclotomic polynomials has also been studied explicitly; see \cite{Toth2010}. In the present setting these sums arise canonically from restricting the finite-frequency expansion to individual gcd-orbits.

Even so, the orbitwise decompositions remain important. They are no longer needed to prove the total collapse formula, but they reveal how the total contribution is internally distributed according to the arithmetic shape of the level. For a general divisor orbit, the contribution is expressed exactly through Ramanujan sums; the semiprime and prime-power cases are especially transparent consequences. At semiprime levels \(n=pq\), the total collapse splits into three \(G_{pq}\)-orbit contributions,
\[
\Omega_{pq}
=
\mathcal O_1(pq)\sqcup \mathcal O_p(pq)\sqcup \mathcal O_q(pq),
\]
with the two non-unit orbits reducing to prime-level cases and the unit orbit governed by a Ramanujan-sum contribution. At prime-power levels \(n=p^N\), the punctured index set decomposes into a tower of gcd-orbits
\[
\Omega_{p^N}
=
\bigsqcup_{s=0}^{N-1}\mathcal O_{p^s}(p^N),
\]
whose deepest boundary orbit reduces to the prime case, while the inner orbits contribute only zero-frequency mass. Thus the prime, semiprime, and prime-power analyses survive in the final paper not as isolated main theorems, but as orbitwise refinements of the stronger all-level collapse theorem.

Accordingly, the present paper should be read neither as a direct replacement for the symmetric theory of \cite{VelezCadavidPartI} nor as a full all-level orbit-theoretic extension of it. Rather, it does two more precise things. First, it identifies a sharp obstruction to naive Galois-theoretic extensions of the symmetric theory: value-level Galois invariance is necessary but carries essentially no rigidity. Second, it exhibits an explicit non-symmetric orbit-structured model class for which a broad all-level stable-range collapse theorem holds, together with refined orbitwise descriptions in the semiprime and prime-power cases.

The logical structure of the paper is as follows. In Section~\ref{sec:summability-Gn}, we remain in the compatible-family framework and prove that summability forces eventual value-level \(G_n\)-invariance, together with the sharp non-converse. In Section~\ref{sec:Gp-patterns}, we introduce the explicit orbit-structured pattern sums and the corresponding finite-generation framework used in the positive theory. In Section~\ref{sec:pattern-collapse}, we prove the main positive theorem: the all-level stable-range collapse formula for every sufficiently large admissible level \(n\). We then deduce eventual polynomiality for uniformly pattern-generated families, derive a general Ramanujan-sum formula for individual gcd-orbits, and record the prime, semiprime, and prime-power cases as transparent refinements. Finally, in Section~\ref{sec:conclusion}, we discuss the remaining composite-level orbit-elimination problem and indicate several directions for further work, including twisted orbit sums and rational-function extensions.

\medskip

\noindent\textbf{Scope of the positive theorems.}
The positive results of the present paper are not full all-level extensions of the compatible-family rigidity theorem of \cite{VelezCadavidPartI}. They concern an explicit class of orbit-structured pattern sums and the families generated from them. Within that class, however, the main collapse theorem is genuinely all-level in the stable range: it applies to every sufficiently large admissible level \(n\), not merely to primes or semiprimes.

\medskip

A secondary aim of the paper is to make visible the arithmetic obstruction that still remains at the orbitwise level. In the symmetric theory of \cite{VelezCadavidPartI}, the finitely many invariants \(P_h(n)\) controlled the stable range because full symmetry erased the internal orbit structure of the punctured index set. In the present paper, the total stable-range formula can still be recovered without tracking separate orbits. For the explicit pattern class studied here, the Ramanujan-sum formula obtained below controls each gcd-orbit at every admissible level. The broader problem persists in a finer form: under what structural hypotheses, beyond this model class, can one control not only the total collapse, but also the separate orbit contributions uniformly across levels?

\section{Summability and the limits of value-level \texorpdfstring{$G_n$}{Gn}-invariance}
\label{sec:summability-Gn}

We begin in the same global framework as in \cite{VelezCadavidPartI}, namely that of truncation-compatible polynomial families. The purpose of the present section is negative and foundational. We isolate the weakest unavoidable Galois-theoretic consequence of summability at cyclotomic cosine points, and then show that this consequence is far too weak to imply any rigidity by itself.

The main point is that, once full symmetric-group symmetry is abandoned, one must distinguish carefully between two different notions of \(G_n\)-invariance: invariance of the \emph{evaluated value} and invariance of the \emph{polynomial formula}. Summability forces the former, but the former alone imposes essentially no asymptotic structure.

\subsection{Compatible families and cosine-point evaluation}

For simplicity, the present sequel is formulated over \(\Q\). The main new issue here is not coefficient-field generality, but rather the contrast between arithmetic invariance and structural orbit symmetry.

For each \(n\ge 2\), let
\[
\omega_n=e^{2\pi i/n},
\qquad
G_n=\Gal(\Q(\omega_n)/\Q)\cong (\Z/n\Z)^\ast,
\]
where \(\sigma_r\in G_n\) is determined by
\[
\sigma_r(\omega_n)=\omega_n^{\,r},
\qquad (r,n)=1.
\]
Write
\[
\alpha_{k,n}=\cos\!\Bigl(\frac{2\pi k}{n}\Bigr)
=\frac{\omega_n^k+\omega_n^{-k}}{2},
\qquad 1\le k\le n-1.
\]
The Galois action permutes these cosine values according to
\[
\sigma_r(\alpha_{k,n})=\alpha_{\sigma_r(k),\,n},
\]
where \(\sigma_r(k)\) denotes the unique element of \(\Omega_n=\{1,\dots,n-1\}\) congruent to \(rk\pmod n\).

Throughout this section, a \emph{compatible family} means a sequence
\[
F=(F_n)_{n\ge 1},
\qquad
F_n\in \Q[x_1,\dots,x_n],
\]
satisfying
\[
F_n(x_1,\dots,x_n)=F_{n+1}(x_1,\dots,x_n,0)
\qquad (n\ge 1).
\]
For \(n\ge 2\), its cosine-point evaluation is
\[
a_F(n)=F_{n-1}(\alpha_{1,n},\dots,\alpha_{n-1,n}).
\]

\subsection{Two notions of \texorpdfstring{$G_n$}{Gn}-invariance}

The first distinction is conceptual and will remain important throughout the paper.

\begin{definition}[Value-level \texorpdfstring{$G_n$}{Gn}-invariance]
\label{def:value-level-invariance}
Let \(F=(F_n)_{n\ge 1}\) be a compatible family.
We say that \(F\) is \emph{value-level \(G_n\)-invariant at level \(n\)} if
\[
a_F(n)=F_{n-1}(\alpha_{1,n},\dots,\alpha_{n-1,n})
\]
is fixed by \(G_n\). Equivalently,
\[
a_F(n)\in \Q.
\]
\end{definition}

\begin{definition}[Polynomial \texorpdfstring{$G_n$}{Gn}-symmetry]
\label{def:polynomial-Gn-symmetry}
Let \(F=(F_n)_{n\ge 1}\) be a compatible family.
We say that \(F\) is \emph{polynomially \(G_n\)-symmetric at level \(n\)} if
\[
F_{n-1}(x_1,\dots,x_{n-1})
=
F_{n-1}(x_{\sigma_r(1)},\dots,x_{\sigma_r(n-1)})
\qquad\text{for all }\sigma_r\in G_n.
\]
\end{definition}

\begin{remark}
\label{rem:two-notions}
These notions should not be confused. Value-level \(G_n\)-invariance is a condition on the evaluated number
\[
F_{n-1}(\alpha_{1,n},\dots,\alpha_{n-1,n}),
\]
hence on its arithmetic field of definition. Polynomial \(G_n\)-symmetry, by contrast, is a condition on the formula before evaluation. In the symmetric theory of \cite{VelezCadavidPartI}, the distinction is largely hidden by the strength of the structural hypotheses. Beyond full symmetry, however, it becomes decisive.
\end{remark}

\subsection{Summability forces eventual rationality}

We now isolate the weakest unavoidable arithmetic consequence of summability.

\begin{definition}[Summability at cosine points]
\label{def:summability-rational}
A compatible family \(F=(F_n)_{n\ge 1}\) is said to be \emph{summable at cosine points} if there exist an integer \(N\ge 2\) and a rational function
\[
R(n)\in \Q(n),
\]
defined at every integer \(n\ge N\), such that
\[
a_F(n)=R(n)
\qquad\text{for all integers }n\ge N.
\]
\end{definition}

\begin{lemma}[Summability forces eventual value-level \texorpdfstring{$G_n$}{Gn}-invariance]
\label{lem:summability-implies-value-invariance}
Let \(F=(F_n)_{n\ge 1}\) be a compatible family.
Assume that \(F\) is summable at cosine points. Then for every sufficiently large \(n\), the value \(a_F(n)\) is fixed by \(G_n\). Equivalently,
\[
a_F(n)\in \Q
\]
for all sufficiently large \(n\).
\end{lemma}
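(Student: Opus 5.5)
The plan is to prove that \(a_F(n)\) is rational directly, using only that \(R\) has rational coefficients and is evaluated at an integer. First I will fix the data from summability: an integer \(N\ge 2\) and \(R\in\Q(n)\), defined at every integer \(n\ge N\), with \(a_F(n)=R(n)\) for all such \(n\). I write \(R=P/Q\) with \(P,Q\in\Q[t]\) coprime. Since \(R\) is defined at each integer \(n\ge N\), we have \(Q(n)\neq 0\) there. Because \(P\) and \(Q\) have rational coefficients and \(n\in\Z\), both \(P(n)\) and \(Q(n)\) lie in \(\Q\). Hence \(R(n)=P(n)/Q(n)\in\Q\), and therefore \(a_F(n)\in\Q\) for every \(n\ge N\).

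Second, I will translate rationality into Galois invariance, so as to match the stated equivalence in Definition~\ref{def:value-level-invariance}. The value \(a_F(n)\) is a polynomial with rational coefficients in the numbers \(\alpha_{k,n}=(\omega_n^k+\omega_n^{-k})/2\). It therefore lies in \(\Q(\omega_n)\), so the action of \(G_n\) on it makes sense. By the fundamental theorem of Galois theory, the fixed field of \(G_n=\Gal(\Q(\omega_n)/\Q)\) is exactly \(\Q\). Consequently \(a_F(n)\in\Q\) is equivalent to \(\sigma_r(a_F(n))=a_F(n)\) for all \(\sigma_r\in G_n\). This gives the claim for all \(n\ge N\).

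There is no real obstacle here. The only point needing care is that the identity \(a_F(n)=R(n)\) is an equality of complex (indeed real) numbers. Rationality of \(R(n)\) then transfers immediately to \(a_F(n)\), with no need to track how the \(G_n\)-action interacts with the polynomial \(F_{n-1}\). Compatibility of the family plays no role in this argument. I will remark that this is exactly why the lemma is weak: summability only forces the value to lie in \(\Q\), and it says nothing about the formula. This sets up the non-converse proved next in the paper.
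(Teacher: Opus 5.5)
Your proposal is correct and follows the paper's argument: since \(R\in\Q(n)\) is defined at each integer \(n\ge N\), \(R(n)\in\Q\), hence \(a_F(n)\in\Q\) and is therefore fixed by \(G_n\). You also justify explicitly two points the paper leaves implicit, namely that \(a_F(n)\in\Q(\omega_n)\) and that the fixed field of \(G_n\) is \(\Q\). You rightly avoid any step about \(\sigma\) commuting with evaluation of \(F_{n-1}\); the paper includes such a step, but it is not needed.
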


\begin{proof}
Let \(N\ge 2\) and \(R(n)\in\Q(n)\) be as in Definition~\ref{def:summability-rational}, so that
\[
a_F(n)=R(n)
\qquad(n\ge N).
\]
Fix \(n\ge N\). Since \(R(n)\in\Q\), the number \(a_F(n)\) belongs to \(\Q\), hence is fixed by every element of \(G_n\).

Because \(F_{n-1}\) has coefficients in \(\Q\), for every \(\sigma\in G_n\) one has
\[
\sigma\!\Bigl(F_{n-1}(\alpha_{1,n},\dots,\alpha_{n-1,n})\Bigr)
=
F_{n-1}\bigl(\sigma(\alpha_{1,n}),\dots,\sigma(\alpha_{n-1,n})\bigr).
\]
Since the left-hand side equals \(\sigma(a_F(n))=a_F(n)\), the value \(a_F(n)\) is fixed by \(G_n\). Thus \(a_F(n)\in\Q\) for all sufficiently large \(n\).
\end{proof}

Lemma~\ref{lem:summability-implies-value-invariance} identifies the weakest arithmetic shadow of summability: eventual agreement with a rational function forces eventual rationality of the evaluated values. The next theorem shows that this condition alone has essentially no rigidity content.

\subsection{The non-converse: arbitrary rational tails}

The next result shows that compatible families can realize completely arbitrary rational-valued tails. In fact, the construction can be carried out with a uniform quadratic degree bound.

\begin{theorem}[Arbitrary rational tails can be interpolated in degree two]
\label{thm:no-converse}
Let \((b_n)_{n\ge 2}\) be any sequence with \(b_n\in \Q\) for all \(n\ge 2\).
Then there exists a compatible family \(F=(F_n)_{n\ge 1}\) such that
\[
\deg F_n\le 2\qquad(n\ge1)
\]
and
\[
F_{n-1}(\alpha_{1,n},\dots,\alpha_{n-1,n})=b_n
\qquad\text{for all }n\ge 2.
\]
In particular, value-level \(G_n\)-invariance does not imply summability at cosine points, even among compatible families of uniformly bounded total degree.
\end{theorem}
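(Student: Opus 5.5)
Build \(F\) incrementally: each level adds one new variable, and that variable carries a degree-two correction term chosen to hit the prescribed target \(b_n\).

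\textbf{The form of the family.} I would take
\[
F_m(x_1,\dots,x_m)=c_1+\sum_{j=2}^{m} c_j\,x_j^2 ,
\]
with rational constants \(c_1,c_2,\dots\) still to be chosen. Compatibility is automatic: setting \(x_{m+1}=0\) in \(F_{m+1}\) kills only the term \(c_{m+1}x_{m+1}^2\), leaving \(F_m\). Every \(F_m\) has total degree at most \(2\). For \(m=1\) we get \(F_1=c_1\), so the condition at level \(n=2\) forces \(c_1=b_2\).

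\textbf{The recursion.} At level \(n\ge3\) we need
\[
b_n=F_{n-1}(\alpha_{1,n},\dots,\alpha_{n-1,n})=c_1+\sum_{j=2}^{n-1}c_j\,\alpha_{j,n}^2 .
\]
The coefficients \(c_1,\dots,c_{n-2}\) are already fixed by the lower levels, and \(c_{n-1}\) appears only with the factor \(\alpha_{n-1,n}^2=\cos^2(2\pi/n)\), which is nonzero for \(n\ge3\). So the equation can always be solved for \(c_{n-1}\) as a real number. The real difficulty is that \(c_{n-1}\) must be rational, while \(\cos^2(2\pi/n)\) and the earlier terms \(\alpha_{j,n}^2\) are generally irrational. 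Solving directly would give an irrational \(c_{n-1}\).

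\textbf{Using Galois-stable pieces (the main obstacle).} To get rational coefficients, the new piece added at each level should evaluate to a rational number at every level. I would therefore build \(F\) from polynomials that are compatible and already sum-like at cosine points. Consider
\[
S_m=\sum_{j=1}^{m}(2x_j)^2 .
\]
This family is compatible and has degree \(2\). At level \(n\) it evaluates to
\[
P_2(n)=\sum_{k=1}^{n-1}(2\alpha_{k,n})^2=\sum_{k=1}^{n-1}\bigl(2+\omega_n^{2k}+\omega_n^{-2k}\bigr),
\]
which equals \(2n-4\) for \(n\ge3\) and \(4\) for \(n=2\). This is rational, but it is one fixed sequence, and we need a fresh rational parameter at each level.

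Instead I would use the "tail" sums
\[
Q_N(x)=\sum_{j\ge N}x_j^2 .
\]
These are infinite, so they are not polynomials. I would truncate them as \(Q_N^{(m)}=\sum_{N\le j\le m}x_j^2\), which again form a compatible family. Then take
\[
F_m=c_1+\sum_{N=2}^{m}c_N\,Q_N^{(m)} .
\]
This is compatible, because the variable \(x_{m+1}\) appears only inside the new sum terms, and it has degree \(2\).

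\textbf{What remains to check, and the fallback.} The step I expect to be the main obstacle is whether \(Q_N^{(n-1)}\) is rational at level \(n\). Take the new coefficient \(c_{n-1}\) at the top level \(N=n-1\): its multiplier is \(\alpha_{n-1,n}^2\), which is irrational, so this choice does not work as written. My fallback is to use, as the new piece at level \(n\), the full sum \(\sum_{j=1}^{m}x_j^2\) multiplied by a coefficient that vanishes at lower levels. That requires a factor killed by setting the new variable to \(0\), and any such factor must involve \(x_{n-1}\), which pushes the degree up. This is exactly the point I would then resolve: I would use \(\cos^2(2\pi/n)+\cos^2(2\pi(n-1)/n)\)-type pairings. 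Since \(\alpha_{j,n}=\alpha_{n-j,n}\), a term like \(x_{n-1}^2\) evaluates to \(\alpha_{1,n}^2\). I would then try to choose the linear combinations so that at every level the new terms collect into full Galois orbits, whose sums are rational. That is the core verification of the proof.
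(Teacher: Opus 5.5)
Your proposal stops where the proof has to happen: the decisive step is deferred as ``the core verification,'' and the strategy you aim for there cannot work. Any compatible family of degree at most $2$ has increments $F_m-F_{m-1}=x_mL_m$, with $L_m$ a rational affine form in $x_1,\dots,x_m$. At level $n$ you must therefore cancel the residual $\Delta_n=b_n-F_{n-2}(\alpha_{1,n},\dots,\alpha_{n-2,n})$, which is in general an irrational element of $\Q(\omega_n)^+$. A correction that is rational at level $n$ (a ``Galois-stable piece'') cannot cancel an irrational residual.

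Worse, your goal of making every increment rational at every later level is unattainable. The value of a fixed nonzero increment $x_mL_m$ at level $n$ is $f(\omega_n)$, where $f(u)=\tfrac12(u^m+u^{-m})g(u)$ is a fixed Laurent polynomial with positive top frequency. Suppose $f(\omega_p)\in\Q$ for a large prime $p$. Averaging over $G_p$ and using the frequency count from the proof of Theorem~\ref{thm:all-level-collapse} gives $f(\omega_p)=(p\,s_0-f(1))/(p-1)$, where $s_0$ is the constant term of $f$. Letting $p\to\infty$ forces $s_0=f(1)$, hence $f(\omega_p)=f(1)$ for infinitely many $p$. So $f$ is constant, a contradiction. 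The pairing $\alpha_{j,n}=\alpha_{n-j,n}$ does not rescue this, because which indices pair up (or form a $G_n$-orbit) changes with $n$ while the polynomial piece is fixed.

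The missing idea is to let the coefficient of the new variable be an affine form in the old variables rather than a constant. The field $\Q(\omega_n)^+$ is spanned over $\Q$ by $1,\alpha_{1,n},\dots,\alpha_{n-2,n}$; apply $z\mapsto (z+\bar z)/2$ to the spanning set $\{\omega_n^j\}$ of $\Q(\omega_n)$. Hence $\Delta_n/\alpha_{n-1,n}=L(\alpha_{1,n},\dots,\alpha_{n-2,n})$ for some rational affine form $L$. Then $F_{n-1}=F_{n-2}+x_{n-1}L(x_1,\dots,x_{n-2})$ is compatible, has degree at most $2$, and takes the value $b_n$ at level $n$. This is the paper's recursion, and it never needs any piece to be rational; residuals at later levels are simply absorbed in turn.

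Your assertion that $\cos(2\pi/n)\neq0$ for $n\ge3$ is also false at $n=4$, where $\alpha_{3,4}=0$. No term involving $x_3$ affects level $4$, so $b_4$ must already be matched by $F_2$. Thus $F_2$ has to interpolate $b_3$ at $(-\tfrac12,-\tfrac12)$ and $b_4$ at $(0,-1)$ simultaneously. Your first ansatz $c_1+c_2x_2^2$ fails here, since it forces $b_4=4b_3-3b_2$. The paper adds the term $d\,x_2(x_2+\tfrac12)$ for this purpose; taking $F_2=b_2+x_2L_2(x_1)$ with $L_2$ affine would also work.
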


\begin{proof}
We construct the family recursively. The only exceptional level is \(4\), because
\(\alpha_{3,4}=0\); compatibility then forces the level-\(4\) value of \(F_3\) to be determined already by \(F_2\).

Set
\[
F_1(x_1)=b_2.
\]
Choose \(c\in\Q\) so that
\[
F_2^{(0)}(x_1,x_2)=b_2+c x_2
\]
satisfies
\[
F_2^{(0)}(\alpha_{1,3},\alpha_{2,3})=b_3.
\]
Since \(\alpha_{2,3}=-1/2\), we may take
\[
c=-2(b_3-b_2).
\]
Now
\[
(\alpha_{1,4},\alpha_{2,4})=(0,-1),
\qquad
\alpha_{2,3}=-\frac12.
\]
Define
\[
F_2(x_1,x_2)
=
F_2^{(0)}(x_1,x_2)
+d\,x_2(x_2-\alpha_{2,3}),
\]
where
\[
d=
\frac{b_4-F_2^{(0)}(0,-1)}{(-1)(-1-\alpha_{2,3})}
\in\Q.
\]
Then
\[
F_2(x_1,0)=F_1(x_1),
\]
while
\[
F_2(\alpha_{1,3},\alpha_{2,3})=b_3,
\qquad
F_2(\alpha_{1,4},\alpha_{2,4})=b_4.
\]
Set
\[
F_3(x_1,x_2,x_3)=F_2(x_1,x_2).
\]
Because \(\alpha_{3,4}=0\), this gives
\[
F_3(\alpha_{1,4},\alpha_{2,4},\alpha_{3,4})=b_4,
\]
and compatibility holds through level \(3\). Moreover, \(\deg F_j\le2\) for \(j=1,2,3\).

We now give the uniform step. Suppose \(n\ge4\) and \(F_{n-1}\in\Q[x_1,\dots,x_{n-1}]\) has been constructed with total degree at most \(2\). Put
\[
\Delta_{n+1}
=
b_{n+1}
-F_{n-1}(\alpha_{1,n+1},\dots,\alpha_{n-1,n+1}).
\]
Since all cosine values lie in the maximal real cyclotomic subfield,
\[
\Delta_{n+1}\in K_{n+1}^{+}:=\Q(\omega_{n+1})^{+}.
\]
Also
\[
\alpha_{n,n+1}
=\cos\!\left(\frac{2\pi n}{n+1}\right)
=\cos\!\left(\frac{2\pi}{n+1}\right)
=\alpha_{1,n+1}\ne0,
\]
because \(n+1\ge5\). Hence
\[
y:=\frac{\Delta_{n+1}}{\alpha_{n,n+1}}\in K_{n+1}^{+}.
\]

The real cyclotomic field is spanned over \(\Q\) by
\[
1,\alpha_{1,n+1},\dots,\alpha_{n-1,n+1}.
\]
Indeed, \(\Q(\omega_{n+1})\) is spanned by powers of \(\omega_{n+1}\), and averaging with complex conjugation shows that its fixed subfield is spanned by
\(1\) and the elements \(\omega_{n+1}^j+\omega_{n+1}^{-j}=2\alpha_{j,n+1}\).
Therefore there exist rational numbers \(c_0,c_1,\dots,c_{n-1}\) such that
\[
y=c_0+\sum_{j=1}^{n-1}c_j\alpha_{j,n+1}.
\]
Let
\[
L_n(x_1,\dots,x_{n-1})
=c_0+\sum_{j=1}^{n-1}c_jx_j
\]
and define
\[
F_n(x_1,\dots,x_n)
=
F_{n-1}(x_1,\dots,x_{n-1})
+x_nL_n(x_1,\dots,x_{n-1}).
\]
Then
\[
F_n(x_1,\dots,x_{n-1},0)=F_{n-1}(x_1,\dots,x_{n-1}),
\]
so compatibility is preserved, and
\begin{align*}
F_n(\alpha_{1,n+1},\dots,\alpha_{n,n+1})
&=F_{n-1}(\alpha_{1,n+1},\dots,\alpha_{n-1,n+1})
  +\alpha_{n,n+1}y\\
&=b_{n+1}.
\end{align*}
Finally, \(x_nL_n\) has total degree at most \(2\), so \(\deg F_n\le2\). The recursion therefore produces the required compatible family at every level.
\end{proof}

\begin{corollary}[Rational-valued non-summable compatible families exist]
\label{cor:prime-indicator-not-summable}
There exists a compatible family \(F=(F_n)_{n\ge 1}\) such that \(a_F(n)\in\Q\) for every \(n\ge 2\), but \(F\) is not summable at cosine points.
\end{corollary}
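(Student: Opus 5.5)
The plan is to apply Theorem~\ref{thm:no-converse} to a rational sequence that no rational function can match on any tail. The label suggests the choice: let \(b_n=1\) if \(n\) is prime and \(b_n=0\) otherwise. All \(b_n\) lie in \(\Q\), so Theorem~\ref{thm:no-converse} gives a compatible family \(F\) with \(\deg F_m\le 2\) and \(a_F(n)=b_n\in\Q\) for every \(n\ge2\). Every value is therefore rational, so \(F\) is value-level \(G_n\)-invariant at every level in the sense of Definition~\ref{def:value-level-invariance}.

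It remains to show that \(F\) is not summable in the sense of Definition~\ref{def:summability-rational}. Suppose, for contradiction, that there are \(N\ge2\) and \(R\in\Q(n)\), defined at every integer \(n\ge N\), with \(R(n)=b_n\) for all \(n\ge N\).

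\begin{enumerate}[label=(\roman*)]
\item Since there are infinitely many composite integers \(n\ge N\), the function \(R\) vanishes at infinitely many points.
\item Write \(R=P/Q\) with \(P,Q\in\Q[n]\) and \(Q\ne0\). Because \(R\) is defined at each of these points, \(Q\) is nonzero there, so \(P\) vanishes at infinitely many points. Hence \(P=0\) and \(R\equiv0\).
\item There are also infinitely many primes \(p\ge N\), and at each of them \(R(p)=b_p=1\ne0\). This contradicts \(R\equiv0\).
\end{enumerate}

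So \(F\) is not summable at cosine points.

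There is no real obstacle here. The only care needed is the elementary fact that a rational function vanishing at infinitely many points of its domain is identically zero, which is exactly step (ii). Any sequence that takes two distinct values infinitely often would work equally well, for example \(b_n=(-1)^n\).
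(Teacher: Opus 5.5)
Your proposal is correct and matches the paper's proof: you apply Theorem~\ref{thm:no-converse} to the same prime-indicator sequence and then use the fact that a rational function with infinitely many zeros vanishes identically. The paper applies that fact to \(R(n)(R(n)-1)\) to get \(R\equiv0\) or \(R\equiv1\), whereas you apply it directly to \(R\) at the composites and then contradict it at the primes; this is equally valid, provided you write \(R=P/Q\) in lowest terms so that ``defined at \(n\)'' means \(Q(n)\ne0\).
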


\begin{proof}
Apply Theorem~\ref{thm:no-converse} to the sequence
\[
b_n=
\begin{cases}
1,& \text{if \(n\) is prime},\\
0,& \text{if \(n\) is composite}.
\end{cases}
\]
Then \(a_F(n)=b_n\in\Q\) for all \(n\ge 2\). If \(F\) were summable at cosine points, then there would exist \(R(n)\in\Q(n)\) such that
\[
a_F(n)=R(n)
\]
for all sufficiently large integers \(n\). Hence
\[
R(n)(R(n)-1)=0
\]
for infinitely many primes and also for infinitely many composites. Therefore the rational function \(R(n)(R(n)-1)\) has infinitely many zeros, so it must vanish identically. It follows that \(R\equiv 0\) or \(R\equiv 1\), which is impossible. Thus \(F\) is not summable.
\end{proof}

\begin{remark}
\label{rem:sec2-transition}
Section~\ref{sec:summability-Gn} therefore identifies a sharp obstruction to any naive Galois-theoretic extension of the symmetric theory of \cite{VelezCadavidPartI}. Value-level \(G_n\)-invariance is a necessary arithmetic consequence of summability, but by itself it imposes no stable-range rigidity, no finite reduction to universal invariants, and no eventual rational-function behavior, even when one restricts to compatible families of total degree at most two.

To recover positive results, one must strengthen the hypotheses from a condition on the evaluated values to a structural condition on the formulas themselves. In the remainder of the paper we do this in a different framework: we leave the global compatible-family setting and pass to explicit orbit-structured pattern sums defined at admissible levels. No all-level compatibility is assumed there unless stated otherwise.
\end{remark}

\section{Orbit-structured pattern sums and finite generation}
\label{sec:Gp-patterns}

Having shown in Section~\ref{sec:summability-Gn} that value-level \(G_n\)-invariance is too weak to support any stable-range rigidity theorem, we now pass to a different positive framework. Instead of working with one truncation-compatible family across all integer levels, we introduce an explicit class of orbit-structured pattern sums defined at admissible levels and formulate the corresponding finite-generation hypothesis.

The role of the present section is preparatory. The all-level stable-range theorem proved in Section~\ref{sec:pattern-collapse} applies not to arbitrary non-symmetric formulas, but to families assembled from finitely many explicit pattern-sum building blocks through a single polynomial recipe. Thus the purpose here is simply to define the positive model class with precision.

\subsection{Admissible levels and multiplicative permutations}

Let \(n\ge 2\), and write
\[
\Omega_n=\{1,\dots,n-1\}.
\]
For each integer \(r\) with \(\gcd(r,n)=1\), multiplication by \(r\) modulo \(n\) induces a permutation of \(\Omega_n\), which we denote by
\[
\sigma_r:\Omega_n\longrightarrow\Omega_n,
\qquad
\sigma_r(k)\equiv rk \pmod n.
\]
Equivalently, \(\sigma_r\) is the permutation of \(\Omega_n\) arising from the Galois action of
\[
G_n=\Gal(\Q(\omega_n)/\Q)\cong (\Z/n\Z)^\ast
\]
on the punctured cyclotomic cosine points
\[
\alpha_{k,n}=\cos\!\Bigl(\frac{2\pi k}{n}\Bigr),
\qquad 1\le k\le n-1.
\]

Thus, even beyond full symmetry, one still has a natural arithmetic action on the punctured index set. The pattern sums introduced below are built from this action.

\subsection{Pattern sums}

We now define the basic building blocks of the positive theory.

\begin{definition}[Pattern sums at admissible levels]
\label{def:Gp-pattern-sums}
Fix integers
\[
t\ge 1,
\qquad
\mathbf r=(r_1,\dots,r_t)\in(\Z\setminus\{0\})^t,
\qquad
\mathbf h=(h_1,\dots,h_t)\in\Z_{\ge0}^t,
\]
with \(r_1=1\). Let \(n\ge 2\) be an integer such that
\[
\gcd(n,r_1\cdots r_t)=1.
\]
Then the associated \emph{pattern sum} is defined by
\[
T^{(n)}_{\mathbf r,\mathbf h}(x_1,\dots,x_{n-1})
=
\sum_{k\in\Omega_n}\ \prod_{j=1}^{t}\bigl(2x_{\sigma_{r_j}(k)}\bigr)^{h_j}
\in\Q[x_1,\dots,x_{n-1}].
\]
\end{definition}

\begin{remark}
\label{rem:pattern-degree}
The polynomial \(T^{(n)}_{\mathbf r,\mathbf h}\) has total degree
\[
|\mathbf h|:=h_1+\cdots+h_t.
\]
In general it is not invariant under the full symmetric group \(S_{n-1}\). Its symmetry is instead arithmetic: it is obtained by averaging one fixed monomial pattern along the multiplicative action of admissible residue classes modulo \(n\).
\end{remark}

\begin{remark}
\label{rem:prime-specialization}
When \(n=p\) is prime, \(\Omega_p\) is a single \(G_p\)-orbit, and Definition~\ref{def:Gp-pattern-sums} recovers the prime-level pattern sums that originally motivated the positive direction of the paper. The final form of the theory, however, is not confined to primes: the same construction makes sense at every admissible level.
\end{remark}

\subsection{Arithmetic invariance}

The pattern sums are not fully symmetric, but they are compatible with the arithmetic permutation action coming from the cyclotomic Galois group.

\begin{lemma}
\label{lem:Gp-pattern-invariant}
Let \(n\ge 2\) be an integer, and let \((\mathbf r,\mathbf h)\) be a pattern such that
\[
\gcd(n,r_1\cdots r_t)=1.
\]
Then the polynomial \(T^{(n)}_{\mathbf r,\mathbf h}\) is invariant under the natural action of \(G_n\) on \(\Omega_n\). Equivalently, for every \(a\) with \(\gcd(a,n)=1\),
\[
T^{(n)}_{\mathbf r,\mathbf h}(x_1,\dots,x_{n-1})
=
T^{(n)}_{\mathbf r,\mathbf h}(x_{\sigma_a(1)},\dots,x_{\sigma_a(n-1)}).
\]
\end{lemma}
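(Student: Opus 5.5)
The plan is a reindexing of the sum defining \(T^{(n)}_{\mathbf r,\mathbf h}\). Fix \(a\) with \(\gcd(a,n)=1\) and substitute \(x_i\mapsto x_{\sigma_a(i)}\) for every \(i\in\Omega_n\). The summand indexed by \(k\) then becomes
\[
\prod_{j=1}^{t}\bigl(2x_{\sigma_a(\sigma_{r_j}(k))}\bigr)^{h_j}.
\]
The whole argument rests on three facts about the maps \(\sigma_r\), which I would verify first.

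\textbf{Facts to verify.}
\begin{enumerate}[label=(\roman*)]
\item Each \(\sigma_r\) with \(\gcd(r,n)=1\) is a well-defined bijection of \(\Omega_n\). If \(k\not\equiv0\pmod n\), then \(rk\not\equiv0\), so \(\sigma_r(k)\in\Omega_n\). Injectivity follows because \(r\) is invertible modulo \(n\), and \(\Omega_n\) is finite.
\item The maps compose multiplicatively and commute: \(\sigma_a\circ\sigma_{r}=\sigma_{ar}=\sigma_{ra}=\sigma_r\circ\sigma_a\). Both sides are the unique element of \(\Omega_n\) congruent to \(ark\pmod n\).
\item The hypothesis \(\gcd(n,r_1\cdots r_t)=1\) gives \(\gcd(r_j,n)=1\) for every \(j\). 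Hence every \(\sigma_{r_j}\) appearing in the definition is a genuine permutation, and this is exactly where admissibility is used.
\end{enumerate}

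\textbf{Reindexing.} Using (ii), the transformed summand equals
\[
\prod_{j=1}^{t}\bigl(2x_{\sigma_{r_j}(\sigma_a(k))}\bigr)^{h_j}.
\]
This is the original summand evaluated at index \(k'=\sigma_a(k)\). By (i), \(k\mapsto\sigma_a(k)\) is a bijection of \(\Omega_n\). Therefore summing over \(k\in\Omega_n\) is the same as summing over \(k'\in\Omega_n\), which returns \(T^{(n)}_{\mathbf r,\mathbf h}(x_1,\dots,x_{n-1})\).

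\textbf{Equivalence with the \(G_n\)-action.} This identifies the substitution statement with invariance under \(G_n\). The element \(\sigma_a\in G_n\) acts on \(\Omega_n\) by the permutation \(\sigma_a\), as recorded in the previous subsection. The induced action on polynomials is precisely the substitution \(x_i\mapsto x_{\sigma_a(i)}\), and every element of \(G_n\cong(\Z/n\Z)^\ast\) is of this form.

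There is no real obstacle. The one step needing care is (ii), since the commutation of \(\sigma_a\) with the \(\sigma_{r_j}\) is what allows the substitution to be absorbed into a change of summation index. It holds because \((\Z/n\Z)^\ast\) is abelian and the maps are defined by residues modulo \(n\).
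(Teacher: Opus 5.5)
Your proposal is correct and follows essentially the same route as the paper: you reindex the sum by the bijection \(k\mapsto\sigma_a(k)\) of \(\Omega_n\), using that \(\sigma_{r_j}\circ\sigma_a=\sigma_a\circ\sigma_{r_j}\) since both are multiplication by \(ar_j\) modulo \(n\). Your explicit checks (that each \(\sigma_{r_j}\) is a genuine permutation of \(\Omega_n\) by admissibility, and that \(\sigma_a\) is a bijection) are details the paper's proof leaves implicit.
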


\begin{proof}
Fix \(a\) with \(\gcd(a,n)=1\). Since \(\sigma_a\) permutes \(\Omega_n\), changing variables \(k\mapsto \sigma_a(k)\) preserves the index set. Moreover, for each \(j\),
\[
\sigma_{r_j}(\sigma_a(k))=\sigma_a(\sigma_{r_j}(k)),
\]
because both sides are congruent to \(ar_jk\pmod n\). Thus the monomials appearing in the defining sum are merely permuted by the action of \(\sigma_a\), and the total sum is unchanged.
\end{proof}

\begin{remark}
\label{rem:orbit-structured-not-symmetric}
Lemma~\ref{lem:Gp-pattern-invariant} shows that the pattern sums belong to a genuinely arithmetic symmetry class. They need not be invariant under arbitrary permutations of the variables, but they are invariant under the much smaller group generated by multiplication modulo the level. This is precisely the symmetry retained from the cyclotomic Galois action once full symmetric-group invariance is abandoned.
\end{remark}

\subsection{Uniform finite generation}

A single pattern sum already provides a structured non-symmetric invariant, but the positive theorem of Section~\ref{sec:pattern-collapse} applies to families generated from finitely many such patterns through a single polynomial rule independent of the level. This is the replacement, in the present framework, for the finite-invariant philosophy of the symmetric theory.

\begin{definition}[Uniform finite pattern generation]
\label{def:UPGp}
A family
\[
\{F^{(n)}\}_{n\ge 2},
\qquad
F^{(n)}\in\Q[x_1,\dots,x_{n-1}],
\]
is said to be \emph{uniformly pattern-generated} if there exist:
\begin{itemize}
\item an integer \(n_0\),
\item finitely many fixed patterns
\[
(\mathbf r^{(1)},\mathbf h^{(1)}),\dots,(\mathbf r^{(M)},\mathbf h^{(M)}),
\]
and
\item a single polynomial
\[
\Psi\in\Q[z_1,\dots,z_M],
\]
\end{itemize}
such that for every integer \(n\ge n_0\) satisfying
\[
\gcd\!\Bigl(n,\prod_{m=1}^M\prod_j r_j^{(m)}\Bigr)=1,
\]
one has
\[
F^{(n)}(x_1,\dots,x_{n-1})
=
\Psi\Bigl(
T^{(n)}_{\mathbf r^{(1)},\mathbf h^{(1)}}(x),\dots,
T^{(n)}_{\mathbf r^{(M)},\mathbf h^{(M)}}(x)
\Bigr).
\]
\end{definition}

\begin{remark}
\label{rem:uniform-pattern-philosophy}
Definition~\ref{def:UPGp} is intentionally explicit. It does not claim to characterize all orbit-structured formulas of interest. Rather, it isolates a concrete finitely generated model class on which one can prove a complete stable-range theorem. Notice that no bound on the total degree of \(F^{(n)}\) is being asserted: the degrees of the finitely many pattern generators are fixed, but the polynomial recipe \(\Psi\) may increase the total degree. The point is not maximal generality, but a precise positive framework strong enough to recover eventual polynomiality beyond the symmetric setting.
\end{remark}

\subsection{The evaluation problem}

For a uniformly pattern-generated family \(\{F^{(n)}\}\), define its cyclotomic cosine evaluation by
\[
a(n)=F^{(n)}(\alpha_{1,n},\dots,\alpha_{n-1,n}),
\qquad
\alpha_{k,n}=\cos\!\Bigl(\frac{2\pi k}{n}\Bigr).
\]
The positive theory now reduces to the evaluation of one fixed pattern sum at a time. If one can show that each
\[
T^{(n)}_{\mathbf r,\mathbf h}(\alpha_{1,n},\dots,\alpha_{n-1,n})
\]
collapses, for all sufficiently large admissible levels \(n\), to an affine function of \(n\), then every uniformly pattern-generated family inherits eventual polynomiality by finite substitution into the single polynomial recipe \(\Psi\).

Section~\ref{sec:pattern-collapse} proves exactly this: a stable-range all-level collapse theorem for every fixed pattern, together with the corresponding polynomiality theorem for finitely generated families. The prime, semiprime, and prime-power cases then appear as orbitwise refinements of that general result.

\section{Stable-range collapse and orbitwise refinements}
\label{sec:pattern-collapse}

This section contains the main positive results of the paper. The key point is that, for the total pattern sum, one can bypass the orbit decomposition entirely. After binomial expansion, the evaluation reduces to finitely many geometric sums indexed by integer frequencies. In the stable range, every nonzero frequency is too small in absolute value to be divisible by the level \(n\), so all nonzero modes contribute the same constant \(-1\), while the zero mode contributes \(n-1\). This yields an all-level collapse theorem valid for every sufficiently large admissible level.

We next derive a general formula for each gcd-orbit in terms of Ramanujan sums. The semiprime and prime-power formulas then follow as particularly simple refinements. These orbitwise results are no longer needed to prove the total collapse formula, but they exhibit the internal arithmetic distribution of the total contribution.

\subsection{All-level stable-range collapse}

We retain the notation of Section~\ref{sec:Gp-patterns}. Thus \((\mathbf r,\mathbf h)\) denotes a fixed pattern with
\[
\mathbf r=(r_1,\dots,r_t)\in(\Z\setminus\{0\})^t,
\qquad
\mathbf h=(h_1,\dots,h_t)\in\Z_{\ge0}^t,
\qquad r_1=1.
\]
Write
\[
H=|\mathbf h|=h_1+\cdots+h_t,
\qquad
B(\mathbf r,\mathbf h)=\sum_{j=1}^t h_j|r_j|.
\]
For brevity, write
\[
\boldsymbol\alpha_n=(\alpha_{1,n},\dots,\alpha_{n-1,n}).
\]
The finite Laurent polynomial
\[
A_{\mathbf r,\mathbf h}(u)
:=
\prod_{j=1}^t (u^{r_j}+u^{-r_j})^{h_j}
\]
will be written as
\begin{equation}\label{eq:frequency-expansion}
A_{\mathbf r,\mathbf h}(u)
=
\sum_{M=-B}^{B}\mathsf S_M(\mathbf r,\mathbf h)u^M.
\end{equation}
Explicitly,
\[
\mathsf S_M(\mathbf r,\mathbf h)
=
\sum_{\substack{m_1,\dots,m_t\in\Z\\
|m_j|\le h_j,\ m_j\equiv h_j\!\!\pmod 2\\
\sum_{j=1}^t m_jr_j=M}}
\prod_{j=1}^t
\binom{h_j}{(h_j+m_j)/2}.
\]
In particular, \(\mathsf S_0(\mathbf r,\mathbf h)\) is the zero-frequency coefficient. Evaluating \eqref{eq:frequency-expansion} at \(u=1\) gives the useful identity
\begin{equation}\label{eq:sum-frequency-coeffs}
\sum_{M=-B}^{B}\mathsf S_M(\mathbf r,\mathbf h)
=A_{\mathbf r,\mathbf h}(1)
=2^H.
\end{equation}

\begin{theorem}[All-level collapse in the stable range]
\label{thm:all-level-collapse}
Fix a pattern \((\mathbf r,\mathbf h)\), and let
\[
H=|\mathbf h|,
\qquad
B=B(\mathbf r,\mathbf h).
\]
Let \(n\ge2\) satisfy
\[
n>B
\qquad\text{and}\qquad
\gcd(n,r_1\cdots r_t)=1.
\]
Then
\[
T^{(n)}_{\mathbf r,\mathbf h}(\alpha_{1,n},\dots,\alpha_{n-1,n})
=
n\,\mathsf S_0(\mathbf r,\mathbf h)-2^H.
\]
In particular, for every fixed pattern, the cyclotomic cosine evaluation is an affine function of \(n\) throughout the stable range.
\end{theorem}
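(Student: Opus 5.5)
The plan is to evaluate the pattern sum directly as a sum over \(k\in\Omega_n\) of a Laurent polynomial evaluated at a root of unity, then swap the order of summation. First, since \(\gcd(r_j,n)=1\), the index \(\sigma_{r_j}(k)\) is congruent to \(r_jk\pmod n\), so
\[
2\alpha_{\sigma_{r_j}(k),n}=\omega_n^{r_jk}+\omega_n^{-r_jk}
\]
regardless of which representative in \(\Omega_n\) is chosen. This is the only place the admissibility hypothesis \(\gcd(n,r_1\cdots r_t)=1\) is needed: it makes \(\sigma_{r_j}\) a well-defined permutation, and the identity holds even when \(r_j\) is negative, because cosine is even. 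Hence each summand of \(T^{(n)}_{\mathbf r,\mathbf h}(\boldsymbol\alpha_n)\) equals \(A_{\mathbf r,\mathbf h}(\omega_n^k)\), and so
\[
T^{(n)}_{\mathbf r,\mathbf h}(\boldsymbol\alpha_n)=\sum_{k=1}^{n-1}A_{\mathbf r,\mathbf h}(\omega_n^k).
\]

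Next, we insert the frequency expansion \eqref{eq:frequency-expansion} and interchange the two finite sums. This gives
\[
\sum_{M=-B}^{B}\mathsf S_M(\mathbf r,\mathbf h)\sum_{k=1}^{n-1}\omega_n^{Mk}.
\]
The inner sum is a punctured geometric sum. It equals \(n-1\) when \(n\mid M\), and \(-1\) otherwise, because the full sum over \(k=0,\dots,n-1\) is \(n\) or \(0\) respectively.

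The key step is the stable-range observation. Every frequency \(M\) with \(\mathsf S_M\neq0\) satisfies \(|M|\le B<n\), so \(n\mid M\) forces \(M=0\). The bound \(|M|\le B\) holds because \(M=\sum_j m_jr_j\) with \(|m_j|\le h_j\). Therefore the zero mode contributes \((n-1)\mathsf S_0\) and every other mode contributes \(-\mathsf S_M\). Summing and using \eqref{eq:sum-frequency-coeffs},
\[
(n-1)\mathsf S_0-\Bigl(2^H-\mathsf S_0\Bigr)=n\,\mathsf S_0(\mathbf r,\mathbf h)-2^H.
\]

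I expect no serious obstacle. The only delicate point is the first step: one must check that replacing the index \(\sigma_{r_j}(k)\in\Omega_n\) by the residue \(r_jk\) is legitimate. This requires \(r_jk\not\equiv0\pmod n\), which holds because \(\gcd(r_j,n)=1\) and \(1\le k\le n-1\), together with the fact that \(\omega_n^{\pm r_jk}\) depends only on \(r_jk\bmod n\).
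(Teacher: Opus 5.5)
Your proposal is correct and follows essentially the same route as the paper's proof: you rewrite each summand as \(A_{\mathbf r,\mathbf h}(\omega_n^k)\), expand into frequencies and interchange the sums, use \(n>B\) so that every nonzero frequency gives the punctured geometric sum \(-1\), and finish with \(\sum_M \mathsf S_M(\mathbf r,\mathbf h)=2^H\). Your explicit check that admissibility justifies replacing \(\sigma_{r_j}(k)\) by the residue \(r_jk\), including for negative \(r_j\), makes precise a step the paper leaves implicit.
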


\begin{proof}
Let \(\zeta_n=e^{2\pi i/n}\). Since the level is admissible,
\[
2\alpha_{\sigma_{r_j}(k),n}
=
\zeta_n^{r_jk}+\zeta_n^{-r_jk}.
\]
Therefore, using \eqref{eq:frequency-expansion},
\begin{align*}
T^{(n)}_{\mathbf r,\mathbf h}(\alpha_{1,n},\dots,\alpha_{n-1,n})
&=
\sum_{k=1}^{n-1}
A_{\mathbf r,\mathbf h}(\zeta_n^k)\\
&=
\sum_{M=-B}^{B}\mathsf S_M(\mathbf r,\mathbf h)
\sum_{k=1}^{n-1}\zeta_n^{Mk}.
\end{align*}
Because \(n>B\), every nonzero frequency in the Laurent polynomial satisfies
\(0<|M|<n\). Hence
\[
\sum_{k=1}^{n-1}\zeta_n^{Mk}
=
\begin{cases}
n-1,&M=0,\\
-1,&M\ne0.
\end{cases}
\]
It follows that
\begin{align*}
T^{(n)}_{\mathbf r,\mathbf h}(\boldsymbol\alpha_n)
&=(n-1)\mathsf S_0
   -\sum_{M\ne0}\mathsf S_M\\
&=n\mathsf S_0-\sum_M\mathsf S_M\\
&=n\mathsf S_0-2^H,
\end{align*}
where the last equality is \eqref{eq:sum-frequency-coeffs}.
\end{proof}

\begin{remark}
\label{rem:all-level-collapse-meaning}
The proof isolates the mechanism completely. The finite Laurent polynomial \(A_{\mathbf r,\mathbf h}\) contains only frequencies \(|M|\le B\). Once \(n>B\), no nonzero frequency can be divisible by \(n\); the zero mode contributes \(n-1\), while every other mode contributes \(-1\). Thus the stable-range evaluation is controlled by the single zero-frequency coefficient together with the universal coefficient sum \(2^H\).
\end{remark}

\subsection{Polynomiality for uniformly pattern-generated families}

The finite-generation hypothesis from Section~\ref{sec:Gp-patterns} now yields a stronger positive theorem than the prime-tail statement proved earlier: one obtains polynomiality for every sufficiently large admissible level, not merely on the prime tail.

\begin{theorem}[Eventual polynomiality at all admissible levels]
\label{thm:all-level-polynomiality}
Let \(\{F^{(n)}\}_{n\ge2}\) be uniformly pattern-generated in the sense of Definition~\ref{def:UPGp}. Thus, for some integer \(n_0\), fixed patterns
\[
(\mathbf r^{(1)},\mathbf h^{(1)}),\dots,(\mathbf r^{(M)},\mathbf h^{(M)}),
\]
and a polynomial \(\Psi\in\Q[z_1,\dots,z_M]\), one has
\[
F^{(n)}(x)
=
\Psi\Bigl(
T^{(n)}_{\mathbf r^{(1)},\mathbf h^{(1)}}(x),\dots,
T^{(n)}_{\mathbf r^{(M)},\mathbf h^{(M)}}(x)
\Bigr)
\]
for every sufficiently large admissible \(n\). Define
\[
a(n)=F^{(n)}(\alpha_{1,n},\dots,\alpha_{n-1,n}).
\]
Then there exist an integer \(n_1\) and a polynomial
\[
R_\infty(n)\in\Q[n]
\]
such that
\[
a(n)=R_\infty(n)
\]
for every \(n\ge n_1\) satisfying
\[
\gcd\!\Bigl(n,\prod_{m=1}^M\prod_j r_j^{(m)}\Bigr)=1.
\]
\end{theorem}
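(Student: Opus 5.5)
The plan is to reduce everything to Theorem~\ref{thm:all-level-collapse}, applied separately to each of the finitely many generating patterns, and then substitute into the fixed recipe \(\Psi\). For each \(m=1,\dots,M\), put \(B_m=B(\mathbf r^{(m)},\mathbf h^{(m)})\) and \(H_m=|\mathbf h^{(m)}|\). Set
\[
n_1=\max\{n_0,\;B_1+1,\dots,B_M+1,\;2\},
\qquad
D=\prod_{m=1}^M\prod_j r_j^{(m)}.
\]
Now take any \(n\ge n_1\) with \(\gcd(n,D)=1\). Then \(\gcd(n,\prod_j r^{(m)}_j)=1\) for each \(m\), because \(\prod_j r^{(m)}_j\) divides \(D\). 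Hence \(n\) is admissible for every generating pattern simultaneously, and \(n>B_m\) for all \(m\).

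At such \(n\), Definition~\ref{def:UPGp} gives the polynomial identity
\[
F^{(n)}=\Psi\bigl(T^{(n)}_{\mathbf r^{(1)},\mathbf h^{(1)}},\dots,T^{(n)}_{\mathbf r^{(M)},\mathbf h^{(M)}}\bigr)
\]
in \(\Q[x_1,\dots,x_{n-1}]\). Evaluation at \(\boldsymbol\alpha_n\) is a ring homomorphism, so it commutes with substitution into \(\Psi\). This gives
\[
a(n)=\Psi\bigl(T^{(n)}_{\mathbf r^{(1)},\mathbf h^{(1)}}(\boldsymbol\alpha_n),\dots,T^{(n)}_{\mathbf r^{(M)},\mathbf h^{(M)}}(\boldsymbol\alpha_n)\bigr).
\]
Theorem~\ref{thm:all-level-collapse} then replaces the \(m\)-th argument by
\[
\ell_m(n):=n\,\mathsf S_0(\mathbf r^{(m)},\mathbf h^{(m)})-2^{H_m}.
\]
This is an affine polynomial in \(n\), and its coefficients are rational: indeed \(\mathsf S_0\) is a nonnegative integer, being a sum of products of binomial coefficients. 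So we define
\[
R_\infty(n):=\Psi\bigl(\ell_1(n),\dots,\ell_M(n)\bigr)\in\Q[n].
\]
Since \(\Psi\) has rational coefficients and each \(\ell_m\in\Z[n]\), this is a polynomial in \(n\) with rational coefficients. Its degree is at most \(\deg\Psi\).

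There is no real obstacle here. The only point needing care is bookkeeping. First, the admissibility condition in the theorem, coprimality with the product \(D\) over all patterns, must imply admissibility for each individual pattern, so that both Definition~\ref{def:UPGp} and Theorem~\ref{thm:all-level-collapse} apply at the same \(n\). Second, the stable-range threshold must be uniform, which holds because there are only finitely many patterns and we take the maximum of the \(B_m\). Note also that \(R_\infty\) does not depend on \(n\) through anything except the affine forms \(\ell_m\). This is exactly why the resulting function is a single polynomial valid on the whole admissible tail.
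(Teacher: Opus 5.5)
Your proposal is correct and matches the paper's proof essentially verbatim: you choose \(n_1\ge n_0\) exceeding every \(B_m\), apply Theorem~\ref{thm:all-level-collapse} to each generating pattern, and substitute the resulting affine forms \(n\,\mathsf S_0-2^{H_m}\) into \(\Psi\). You also state explicitly that coprimality with the full product \(\prod_m\prod_j r^{(m)}_j\) gives admissibility for each individual pattern, a bookkeeping step the paper leaves implicit.
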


\begin{proof}
For \(m=1,\dots,M\), put
\[
H_m=|\mathbf h^{(m)}|,
\qquad
B_m=B(\mathbf r^{(m)},\mathbf h^{(m)}).
\]
Choose \(n_1\ge n_0\) with \(n_1>\max_m B_m\). If \(n\ge n_1\) is admissible for all the fixed patterns, Theorem~\ref{thm:all-level-collapse} gives
\[
T^{(n)}_{\mathbf r^{(m)},\mathbf h^{(m)}}(\boldsymbol\alpha_n)
=A_m n-C_m,
\]
where
\[
A_m=\mathsf S_0(\mathbf r^{(m)},\mathbf h^{(m)})\in\Z_{\ge0},
\qquad
C_m=2^{H_m}.
\]
Hence
\[
a(n)=\Psi(A_1n-C_1,\dots,A_Mn-C_M).
\]
The right-hand side is a polynomial in \(n\). Thus one may take
\[
R_\infty(n)=\Psi(A_1n-C_1,\dots,A_Mn-C_M)\in\Q[n].
\]
\end{proof}

\begin{remark}
\label{rem:all-level-vs-prime-tail}
Theorem~\ref{thm:all-level-polynomiality} strictly strengthens the prime-tail polynomiality statement. The prime-tail result is recovered by restricting to prime values of \(n\), but the same collapse mechanism applies in fact to every sufficiently large admissible level.
\end{remark}

\subsection{Prime and prime-tail consequences}

The prime-tail theorem is an immediate corollary of the all-level collapse theorem.

\begin{corollary}[Prime-tail collapse]
\label{cor:prime-tail-collapse}
Fix a pattern \((\mathbf r,\mathbf h)\), and let
\[
H=|\mathbf h|,
\qquad
B=B(\mathbf r,\mathbf h).
\]
Then for every prime \(p>B\) such that
\[
p\nmid r_1\cdots r_t,
\]
one has
\[
T^{(p)}_{\mathbf r,\mathbf h}(\alpha_{1,p},\dots,\alpha_{p-1,p})
=
p\,\mathsf S_0(\mathbf r,\mathbf h)-2^H.
\]
\end{corollary}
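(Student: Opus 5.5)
This corollary is the specialization of Theorem~\ref{thm:all-level-collapse} to prime levels, so I would only check that a prime \(p\) with the stated properties meets the two hypotheses of that theorem.

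\emph{Stable range.} The hypothesis \(p>B\) is exactly the condition \(n>B\) with \(n=p\). Since \(p\) is prime, \(p\ge2\) automatically.

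\emph{Admissibility.} For a prime \(p\), the condition \(\gcd(p,r_1\cdots r_t)=1\) is equivalent to \(p\nmid r_1\cdots r_t\), which is the assumption. The product is nonzero because every \(r_j\in\Z\setminus\{0\}\), so the divisibility condition is meaningful. The pattern sum \(T^{(p)}_{\mathbf r,\mathbf h}\) is therefore defined in the sense of Definition~\ref{def:Gp-pattern-sums}.

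Applying Theorem~\ref{thm:all-level-collapse} with \(n=p\) then gives
\[
T^{(p)}_{\mathbf r,\mathbf h}(\alpha_{1,p},\dots,\alpha_{p-1,p})=p\,\mathsf S_0(\mathbf r,\mathbf h)-2^H.
\]

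There is no genuine obstacle here. The only point needing care is to confirm that the admissibility condition in the corollary, \(p\nmid\prod_j r_j\), is the same as the coprimality hypothesis of the theorem. For prime moduli this equivalence is immediate.

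One could also note a redundancy. Because \(r_1=1\), if \(h_j>0\) for some \(j\) then \(p>B\ge h_j|r_j|\ge|r_j|\), so \(p\) cannot divide that \(r_j\). The condition is nevertheless needed for indices with \(h_j=0\), since such \(r_j\) do not enter \(B\). For that reason I would keep it as a stated hypothesis rather than derive it.
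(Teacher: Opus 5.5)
Your proposal is correct and is essentially the paper's own one-line proof: apply Theorem~\ref{thm:all-level-collapse} with \(n=p\), after noting that for a prime modulus \(p\nmid r_1\cdots r_t\) is the same as \(\gcd(p,r_1\cdots r_t)=1\). Your side remark is also right: the hypothesis is automatic for indices with \(h_j>0\) and genuinely needed only for those with \(h_j=0\), although that argument does not actually use \(r_1=1\).
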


\begin{proof}
Apply Theorem~\ref{thm:all-level-collapse} with \(n=p\).
\end{proof}

\begin{corollary}[Prime-tail polynomiality]
\label{cor:prime-tail-polynomiality}
Let
\[
\{F^{(p)}\}_{p\ \mathrm{prime}}
\]
be the restriction to prime levels of a uniformly pattern-generated family in the sense of Definition~\ref{def:UPGp}. Define
\[
a(p)=F^{(p)}(\alpha_{1,p},\dots,\alpha_{p-1,p}).
\]
Then there exist a prime threshold \(p_1\) and a polynomial
\[
R_\infty(p)\in\Q[p]
\]
such that
\[
a(p)=R_\infty(p)
\qquad\text{for all primes }p\ge p_1.
\]
\end{corollary}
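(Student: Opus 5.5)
The plan is to read this off Theorem~\ref{thm:all-level-polynomiality} (or directly from Corollary~\ref{cor:prime-tail-collapse}) by restricting to primes. By Definition~\ref{def:UPGp}, the family comes with an integer \(n_0\), finitely many fixed patterns \((\mathbf r^{(m)},\mathbf h^{(m)})\) for \(m=1,\dots,M\), and one polynomial \(\Psi\in\Q[z_1,\dots,z_M]\). For every admissible \(n\ge n_0\), \(F^{(n)}\) is \(\Psi\) applied to the corresponding pattern sums. The only point needing care is admissibility at prime levels. Put \(D=\prod_{m}\prod_j r^{(m)}_j\), which is nonzero because every \(r_j\neq 0\). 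Then a prime \(p\) satisfies \(\gcd(p,D)=1\) exactly when \(p\nmid D\), and only finitely many primes divide \(D\).

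Next I will fix the threshold. Set \(B_m=B(\mathbf r^{(m)},\mathbf h^{(m)})\), and choose \(p_1\) to be a prime exceeding \(\max(n_0,\max_m B_m,|D|)\). Every prime \(p\ge p_1\) then satisfies three things at once: \(p\ge n_0\), \(p>B_m\) for all \(m\), and \(p\nmid D\), since \(p>|D|\ge 1\). For such \(p\), Corollary~\ref{cor:prime-tail-collapse} applies to each pattern separately and gives
\[
T^{(p)}_{\mathbf r^{(m)},\mathbf h^{(m)}}(\alpha_{1,p},\dots,\alpha_{p-1,p})=A_m p-C_m,
\]
where \(A_m=\mathsf S_0(\mathbf r^{(m)},\mathbf h^{(m)})\) and \(C_m=2^{|\mathbf h^{(m)}|}\).

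Finally I substitute these values into \(\Psi\). This gives \(a(p)=\Psi(A_1p-C_1,\dots,A_Mp-C_M)\). The right-hand side is the value at \(p\) of the polynomial \(R_\infty(n):=\Psi(A_1n-C_1,\dots,A_Mn-C_M)\in\Q[n]\), and \(R_\infty\) does not depend on \(p\). This proves the claim.

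I do not expect a real obstacle. The only step that needs attention is choosing one threshold that guarantees admissibility and the stable range \(p>B_m\) simultaneously for all generators. That works because \(D\neq 0\), so only finitely many primes are excluded.
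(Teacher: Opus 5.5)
Your proof is correct and is essentially the paper's argument. The paper restricts Theorem~\ref{thm:all-level-polynomiality} to prime \(n\); you unwind that theorem at prime levels via Corollary~\ref{cor:prime-tail-collapse}, substitute the affine values \(A_mp-C_m\) into \(\Psi\), and choose a prime \(p_1>\max(n_0,\max_m B_m,|D|)\) with \(D\neq 0\). That choice supplies the detail the paper's one-line proof leaves implicit: the conclusion then holds for all primes \(p\ge p_1\) with no separate admissibility clause.
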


\begin{proof}
This follows immediately from Theorem~\ref{thm:all-level-polynomiality} by restricting to prime values of \(n\).
\end{proof}

\subsection{An explicit non-symmetric example}

We record a simple example showing that the theory applies to families which are genuinely arithmetic and not fully symmetric.

\begin{example}
\label{ex:prime-tail-example}
For every integer \(n\ge2\) with \(\gcd(n,6)=1\), define
\[
Z_1^{(n)}(x_1,\dots,x_{n-1})
=
\sum_{k=1}^{n-1}(2x_k)(2x_{\sigma_2(k)})(2x_{\sigma_3(k)})
\]
and
\[
Z_2^{(n)}(x_1,\dots,x_{n-1})
=
\sum_{k=1}^{n-1}(2x_k)^2(2x_{\sigma_2(k)}).
\]
Set
\[
F^{(n)}(x)=Z_1^{(n)}(x)Z_2^{(n)}(x).
\]
Then \(\{F^{(n)}\}\) is uniformly pattern-generated on the admissible levels, and for every \(n>6\) with \(\gcd(n,6)=1\),
\[
F^{(n)}(\alpha_{1,n},\dots,\alpha_{n-1,n})=4(n-4)^2.
\]
\end{example}

\begin{proof}
The first generator corresponds to
\[
\mathbf r=(1,2,3),
\qquad
\mathbf h=(1,1,1).
\]
Here \(H=3\), \(B=6\), and the zero-frequency equation
\[
m_1+2m_2+3m_3=0,
\qquad m_j\in\{\pm1\},
\]
has exactly the two solutions
\[
(1,1,-1),\qquad(-1,-1,1).
\]
Hence \(\mathsf S_0=2\), and Theorem~\ref{thm:all-level-collapse} gives
\[
Z_1^{(n)}(\boldsymbol\alpha_n)=2n-8
\]
for every admissible \(n>6\).

For \(Z_2^{(n)}\), use the pattern
\[
\mathbf r=(1,2),
\qquad
\mathbf h=(2,1).
\]
Then \(H=3\), \(B=4\), and the zero-frequency condition is
\[
m_1+2m_2=0,
\qquad
m_1\in\{-2,0,2\},\quad m_2\in\{\pm1\}.
\]
The only solutions are \((-2,1)\) and \((2,-1)\), each with coefficient \(1\). Thus \(\mathsf S_0=2\), so
\[
Z_2^{(n)}(\boldsymbol\alpha_n)=2n-8
\]
for every admissible \(n>4\). Consequently, for \(n>6\) and \(\gcd(n,6)=1\),
\[
F^{(n)}(\boldsymbol\alpha_n)
=(2n-8)^2
=4(n-4)^2.
\]
\end{proof}

\begin{remark}
\label{rem:example-not-symmetric}
Example~\ref{ex:prime-tail-example} lies genuinely outside the symmetric bounded-degree framework of \cite{VelezCadavidPartI}. Its eventual polynomial behavior is forced by orbit structure rather than by full permutation symmetry.
\end{remark}

\subsection{General gcd-orbits and Ramanujan sums}
\label{subsec:general-orbits}

The orbitwise structure admits an exact formula at every admissible level, not only for semiprimes and prime powers. For \(d\mid n\), \(d<n\), set
\[
\mathcal O_d(n)=\{k\in\Omega_n:\gcd(k,n)=d\}
\]
and define
\[
T^{(n;d)}_{\mathbf r,\mathbf h}(x)
=
\sum_{k\in\mathcal O_d(n)}
\prod_{j=1}^t(2x_{\sigma_{r_j}(k)})^{h_j}.
\]
We use the classical Ramanujan sum \cite{Ramanujan1918,Apostol1976,McCarthy1986}
\[
c_q(M)=\sum_{u\in(\Z/q\Z)^\ast}e^{2\pi iMu/q}.
\]

\begin{proposition}[Exact orbitwise Ramanujan formula]
\label{prop:general-orbit-ramanujan}
Let \((\mathbf r,\mathbf h)\) be a fixed pattern, set \(B=B(\mathbf r,\mathbf h)\), and let \(n\) be an admissible level. If \(d\mid n\), \(d<n\), and \(q=n/d\), then
\[
T^{(n;d)}_{\mathbf r,\mathbf h}(\alpha_{1,n},\dots,\alpha_{n-1,n})
=
\sum_{M=-B}^{B}
\mathsf S_M(\mathbf r,\mathbf h)c_q(M).
\]
\end{proposition}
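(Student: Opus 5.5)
The plan is to repeat the frequency-expansion argument from the proof of Theorem~\ref{thm:all-level-collapse}, but with the sum over \(k\) restricted to the orbit \(\mathcal O_d(n)\) instead of all of \(\Omega_n\). Since the level is admissible, for every \(k\in\Omega_n\) and every \(j\) we have \(2\alpha_{\sigma_{r_j}(k),n}=\zeta_n^{r_jk}+\zeta_n^{-r_jk}\), with \(\zeta_n=e^{2\pi i/n}\). This holds because \(\sigma_{r_j}(k)\equiv r_jk\pmod n\), and the cosine depends only on the residue class. Substituting this into the definition of \(T^{(n;d)}_{\mathbf r,\mathbf h}\) gives
\[
T^{(n;d)}_{\mathbf r,\mathbf h}(\boldsymbol\alpha_n)=\sum_{k\in\mathcal O_d(n)}A_{\mathbf r,\mathbf h}(\zeta_n^k)
=\sum_{M=-B}^{B}\mathsf S_M(\mathbf r,\mathbf h)\sum_{k\in\mathcal O_d(n)}\zeta_n^{Mk},
\]
by \eqref{eq:frequency-expansion} and exchanging the finite sums.

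The remaining step is to identify each inner sum with the Ramanujan sum \(c_q(M)\). We parametrize the orbit: \(k\in\mathcal O_d(n)\) exactly when \(k=du\) with \(1\le u\le q-1\) and \(\gcd(u,q)=1\). When \(q\ge2\) this is precisely a set of representatives of \((\Z/q\Z)^\ast\). The map \(u\mapsto du\) is a bijection onto \(\mathcal O_d(n)\), since \(\gcd(du,n)=d\gcd(u,q)\) and \(1\le du\le n-1\) corresponds to \(1\le u\le q-1\). The hypothesis \(d<n\) gives \(q\ge2\), so the residue \(0\) is never a unit and the puncture at \(k=0\) costs nothing. Then
\[
\zeta_n^{Mdu}=e^{2\pi i M du/n}=e^{2\pi i Mu/q},
\]
so the inner sum equals \(c_q(M)\) for every integer \(M\). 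The summand depends only on \(u\bmod q\), so \(c_q(M)\) is well defined.

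Combining the two displays gives the stated formula. No stable-range hypothesis \(n>B\) is needed, since we never evaluate a geometric sum and instead keep the full Ramanujan sum \(c_q(M)\). The only point requiring care is the bookkeeping in the orbit parametrization, namely checking that the punctured index set and the condition \(d<n\) match exactly the unit group of \(\Z/q\Z\). As a sanity check, summing over all divisors \(d<n\) and using \(\sum_{q\mid n}c_q(M)=n\cdot[n\mid M]\) (the \(q=1\) term equals \(1\)) recovers Theorem~\ref{thm:all-level-collapse} when \(n>B\).
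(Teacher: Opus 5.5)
Your proof is correct and follows essentially the same route as the paper. You expand through the Laurent polynomial \(A_{\mathbf r,\mathbf h}\), parametrize \(\mathcal O_d(n)\) as \(k=du\) with \(u\in(\Z/q\Z)^\ast\), and identify each inner sum with \(c_q(M)\); the paper merely substitutes \(k=du\) before expanding rather than after, and your remarks that \(n>B\) is not needed and that summing over \(d\) recovers Theorem~\ref{thm:all-level-collapse} are both accurate.
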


\begin{proof}
Every \(k\in\mathcal O_d(n)\) has a unique representation
\[
k=du,
\qquad
u\in(\Z/q\Z)^\ast,
\]
with \(q=n/d\). Hence
\[
\alpha_{\sigma_{r_j}(k),n}
=
\cos\!\left(\frac{2\pi r_jdu}{n}\right)
=
\cos\!\left(\frac{2\pi r_ju}{q}\right).
\]
Writing \(\zeta_q=e^{2\pi i/q}\) and using \eqref{eq:frequency-expansion}, we obtain
\begin{align*}
T^{(n;d)}_{\mathbf r,\mathbf h}(\boldsymbol\alpha_n)
&=
\sum_{u\in(\Z/q\Z)^\ast}
A_{\mathbf r,\mathbf h}(\zeta_q^u)\\
&=
\sum_{M=-B}^{B}\mathsf S_M(\mathbf r,\mathbf h)
\sum_{u\in(\Z/q\Z)^\ast}\zeta_q^{Mu}\\
&=
\sum_{M=-B}^{B}\mathsf S_M(\mathbf r,\mathbf h)c_q(M).
\end{align*}
\end{proof}

\begin{remark}
The standard identity
\[
c_q(M)
=
\mu\!\left(\frac{q}{(q,M)}\right)
\frac{\varphi(q)}{\varphi(q/(q,M))}
\]
(with the usual interpretation when \(M=0\)) makes the dependence on the arithmetic type of the orbit explicit. In particular, the semiprime and prime-power formulas below are immediate simplifications of Proposition~\ref{prop:general-orbit-ramanujan}.
\end{remark}

\subsection{Semiprime orbitwise refinement}

Although Theorem~\ref{thm:all-level-collapse} already yields the total formula for admissible semiprime levels, the orbitwise decomposition remains informative because it shows how the total contribution is distributed across the distinct \(G_{pq}\)-orbits.

Let \(p\neq q\) be primes, and write
\[
n=pq.
\]
Then
\[
\Omega_{pq}=\{1,\dots,pq-1\}
=
\mathcal O_1(pq)\sqcup \mathcal O_p(pq)\sqcup \mathcal O_q(pq),
\]
where
\[
\mathcal O_d(pq)=\{\,k\in\Omega_{pq}:\gcd(k,pq)=d\,\},
\qquad d\in\{1,p,q\}.
\]
For a fixed pattern \((\mathbf r,\mathbf h)\), define the orbitwise sums
\[
T^{(pq;d)}_{\mathbf r,\mathbf h}(x_1,\dots,x_{pq-1})
=
\sum_{k\in \mathcal O_d(pq)}
\prod_{j=1}^t (2x_{\sigma_{r_j}(k)})^{h_j},
\qquad d\in\{1,p,q\}.
\]

\begin{theorem}[Semiprime orbitwise refinement]
\label{thm:semiprime-orbit-collapse}
Fix a pattern \((\mathbf r,\mathbf h)\), and let
\[
H=|\mathbf h|,
\qquad
B=B(\mathbf r,\mathbf h).
\]
Assume that \(p\neq q\) are primes such that
\[
p>B,\qquad q>B,\qquad p\nmid r_1\cdots r_t,\qquad q\nmid r_1\cdots r_t.
\]
Then
\begin{align*}
T^{(pq;p)}_{\mathbf r,\mathbf h}(\alpha_{1,pq},\dots,\alpha_{pq-1,pq})
&=
q\,\mathsf S_0(\mathbf r,\mathbf h)-2^H,\\[2mm]
T^{(pq;q)}_{\mathbf r,\mathbf h}(\alpha_{1,pq},\dots,\alpha_{pq-1,pq})
&=
p\,\mathsf S_0(\mathbf r,\mathbf h)-2^H,\\[2mm]
T^{(pq;1)}_{\mathbf r,\mathbf h}(\alpha_{1,pq},\dots,\alpha_{pq-1,pq})
&=
\bigl((p-1)(q-1)-1\bigr)\mathsf S_0(\mathbf r,\mathbf h)+2^H.
\end{align*}
Consequently,
\[
T^{(pq)}_{\mathbf r,\mathbf h}(\alpha_{1,pq},\dots,\alpha_{pq-1,pq})
=
pq\,\mathsf S_0(\mathbf r,\mathbf h)-2^H.
\]
\end{theorem}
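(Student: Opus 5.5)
Everything follows from Proposition~\ref{prop:general-orbit-ramanujan} together with the coefficient identity \eqref{eq:sum-frequency-coeffs}. The level \(n=pq\) is admissible, since \(p,q\nmid r_1\cdots r_t\) gives \(\gcd(pq,r_1\cdots r_t)=1\). We can therefore apply the proposition to each of the three divisors \(d\in\{1,p,q\}\), with \(q_d=n/d\) equal to \(pq\), \(q\), \(p\) respectively. For each orbit we obtain
\[
T^{(pq;d)}_{\mathbf r,\mathbf h}(\boldsymbol\alpha_{pq})=\sum_{M=-B}^{B}\mathsf S_M(\mathbf r,\mathbf h)\,c_{n/d}(M).
\]
It then remains to evaluate the Ramanujan sums on the range \(|M|\le B\).

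For \(d=p\) we have \(c_q(M)=\sum_{u\in(\Z/q\Z)^\ast}\zeta_q^{Mu}\). The prime \(q\) satisfies \(q>B\), so for \(0<|M|\le B\) it does not divide \(M\). This gives \(c_q(M)=-1\) for \(M\ne0\) and \(c_q(0)=q-1\). The orbit sum is therefore \((q-1)\mathsf S_0-\sum_{M\ne0}\mathsf S_M=q\mathsf S_0-2^H\), by \eqref{eq:sum-frequency-coeffs}. This is exactly the computation in the proof of Theorem~\ref{thm:all-level-collapse}; alternatively, it is Corollary~\ref{cor:prime-tail-collapse} at level \(q\), since \(k=pu\) gives \(\alpha_{\sigma_{r_j}(k),pq}=\cos(2\pi r_ju/q)\). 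The case \(d=q\) is identical with \(p\) and \(q\) swapped.

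For the unit orbit \(d=1\) we need \(c_{pq}(M)\). I would use multiplicativity, \(c_{pq}(M)=c_p(M)c_q(M)\) for coprime \(p,q\), or the Möbius formula from the remark after Proposition~\ref{prop:general-orbit-ramanujan}. For \(M\ne0\) with \(|M|\le B<\min(p,q)\), neither prime divides \(M\), so \(c_{pq}(M)=(-1)(-1)=1\). For \(M=0\) we get \(c_{pq}(0)=\varphi(pq)=(p-1)(q-1)\). Hence
\[
T^{(pq;1)}=(p-1)(q-1)\mathsf S_0+\sum_{M\ne0}\mathsf S_M=(p-1)(q-1)\mathsf S_0+2^H-\mathsf S_0,
\]
which is the stated formula.

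Finally, adding the three orbit contributions gives the total:
\[
(q+p)\mathsf S_0-2\cdot2^H+\bigl((p-1)(q-1)-1\bigr)\mathsf S_0+2^H=pq\,\mathsf S_0-2^H.
\]
This agrees with Theorem~\ref{thm:all-level-collapse}, since \(pq>B\). The only step requiring care is the sign bookkeeping for \(c_{pq}(M)\) on nonzero frequencies. It rests on the strict inequalities \(p,q>B\), which guarantee that no nonzero frequency is divisible by either prime; everything else is routine.
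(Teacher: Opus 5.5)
Your proposal is correct and follows the paper's proof essentially step for step. Both apply the orbitwise Ramanujan formula with quotient moduli \(q\), \(p\), \(pq\), and use \(p,q>B\) to get \(c_q(M)=c_p(M)=-1\) and \(c_{pq}(M)=1\) on nonzero frequencies. Both then close with \(\sum_M \mathsf S_M(\mathbf r,\mathbf h)=2^H\). The only cosmetic difference is that you obtain \(c_{pq}(M)=1\) via multiplicativity \(c_{pq}=c_p c_q\), whereas the paper uses \((M,pq)=1\Rightarrow c_{pq}(M)=\mu(pq)=1\).
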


\begin{proof}
For the orbit \(\mathcal O_p(pq)\), one has \(q=(pq)/p\). Since \(q>B\), Proposition~\ref{prop:general-orbit-ramanujan} and
\[
c_q(0)=q-1,
\qquad
c_q(M)=-1\quad(0<|M|<q)
\]
give
\[
T^{(pq;p)}_{\mathbf r,\mathbf h}(\boldsymbol\alpha_{pq})
=q\mathsf S_0-2^H.
\]
The same argument with \(p\) and \(q\) interchanged gives
\[
T^{(pq;q)}_{\mathbf r,\mathbf h}(\boldsymbol\alpha_{pq})
=p\mathsf S_0-2^H.
\]

For the unit orbit, the relevant Ramanujan sum has modulus \(pq\). If \(M\ne0\), then
\(|M|\le B<\min\{p,q\}\), so \((M,pq)=1\), and therefore
\[
c_{pq}(M)=\mu(pq)=1.
\]
Also \(c_{pq}(0)=\varphi(pq)=(p-1)(q-1)\). Hence
\begin{align*}
T^{(pq;1)}_{\mathbf r,\mathbf h}(\boldsymbol\alpha_{pq})
&=(p-1)(q-1)\mathsf S_0+
  \sum_{M\ne0}\mathsf S_M\\
&=\bigl((p-1)(q-1)-1\bigr)\mathsf S_0+2^H,
\end{align*}
using \eqref{eq:sum-frequency-coeffs}. Adding the three orbit contributions yields
\[
T^{(pq)}_{\mathbf r,\mathbf h}(\boldsymbol\alpha_{pq})
=pq\,\mathsf S_0-2^H,
\]
as required.
\end{proof}

\begin{remark}
\label{rem:semiprime-interpretation}
Theorem~\ref{thm:semiprime-orbit-collapse} no longer serves as the main positive theorem, since the total semiprime collapse already follows from Theorem~\ref{thm:all-level-collapse}. Its role is now finer: it exhibits the internal distribution of the total contribution across the three semiprime orbit types. In particular, it shows that the boundary orbits reduce to prime-level sums, whereas the unit orbit contributes through a Ramanujan-sum mechanism.
\end{remark}

\subsection{Prime-power orbitwise refinement}

Prime powers provide a second natural refinement. Again, the total formula follows immediately from the all-level theorem, but the orbitwise decomposition is structurally revealing.

Let \(N\ge 1\), and write
\[
n=p^N.
\]
Then
\[
\Omega_{p^N}
=
\bigsqcup_{s=0}^{N-1}\mathcal O_{p^s}(p^N),
\qquad
\mathcal O_{p^s}(p^N)=\{\,k\in\Omega_{p^N}:\gcd(k,p^N)=p^s\,\}.
\]
For a fixed pattern \((\mathbf r,\mathbf h)\), define
\[
T^{(p^N;p^s)}_{\mathbf r,\mathbf h}(x_1,\dots,x_{p^N-1})
=
\sum_{k\in\mathcal O_{p^s}(p^N)}
\prod_{j=1}^t (2x_{\sigma_{r_j}(k)})^{h_j},
\qquad 0\le s\le N-1.
\]

\begin{theorem}[Prime-power orbitwise refinement]
\label{thm:prime-power-orbit-collapse}
Fix a pattern \((\mathbf r,\mathbf h)\), and let
\[
H=|\mathbf h|,
\qquad
B=B(\mathbf r,\mathbf h).
\]
Let \(N\ge 1\). Assume that \(p\) is a prime such that
\[
p>B
\qquad\text{and}\qquad
p\nmid r_1\cdots r_t.
\]
Then
\begin{align*}
T^{(p^N;p^{N-1})}_{\mathbf r,\mathbf h}(\alpha_{1,p^N},\dots,\alpha_{p^N-1,p^N})
&=
p\,\mathsf S_0(\mathbf r,\mathbf h)-2^H,\\[2mm]
T^{(p^N;p^s)}_{\mathbf r,\mathbf h}(\alpha_{1,p^N},\dots,\alpha_{p^N-1,p^N})
&=
\varphi(p^{N-s})\,\mathsf S_0(\mathbf r,\mathbf h)
\qquad (0\le s\le N-2).
\end{align*}
Consequently,
\[
T^{(p^N)}_{\mathbf r,\mathbf h}(\alpha_{1,p^N},\dots,\alpha_{p^N-1,p^N})
=
p^N\,\mathsf S_0(\mathbf r,\mathbf h)-2^H.
\]
\end{theorem}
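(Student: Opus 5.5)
The plan is to apply Proposition~\ref{prop:general-orbit-ramanujan} to each gcd-orbit separately, with \(n=p^N\) and \(d=p^s\), so that \(q=p^{N-s}\). The level \(p^N\) is admissible because \(p\nmid r_1\cdots r_t\) gives \(\gcd(p^N,r_1\cdots r_t)=1\). Each orbit contribution then equals
\[
T^{(p^N;p^s)}_{\mathbf r,\mathbf h}(\boldsymbol\alpha_{p^N})
=\sum_{M=-B}^{B}\mathsf S_M(\mathbf r,\mathbf h)\,c_{p^{N-s}}(M),
\]
and the whole task reduces to evaluating the Ramanujan sums \(c_{p^e}(M)\) for \(|M|\le B<p\) and \(e=N-s\ge1\).

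For \(M=0\) we have \(c_{p^e}(0)=\varphi(p^e)\). For \(0<|M|\le B<p\), we have \(p\nmid M\), so \((p^e,M)=1\). The standard identity in the remark after Proposition~\ref{prop:general-orbit-ramanujan} then gives \(c_{p^e}(M)=\mu(p^e)\). This equals \(-1\) when \(e=1\) and \(0\) when \(e\ge2\).

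Take the boundary orbit first, \(s=N-1\), so \(e=1\). The sum becomes \((p-1)\mathsf S_0-\sum_{M\ne0}\mathsf S_M\). By \eqref{eq:sum-frequency-coeffs} this equals \(p\,\mathsf S_0-2^H\). For the inner orbits, \(0\le s\le N-2\), we have \(e\ge2\), so every nonzero frequency is killed and only \(\varphi(p^{N-s})\mathsf S_0\) survives, as claimed.

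For the total, I will sum over \(s\). The inner orbits give \(\mathsf S_0\sum_{e=2}^{N}\varphi(p^e)\). By telescoping, \(\sum_{e=1}^{N}\varphi(p^e)=p^N-1\), so \(\sum_{e=2}^{N}\varphi(p^e)=p^N-p\). Adding the boundary contribution gives \((p^N-p)\mathsf S_0+p\,\mathsf S_0-2^H=p^N\mathsf S_0-2^H\). This agrees with Theorem~\ref{thm:all-level-collapse}, which applies since \(p^N\ge p>B\), and that theorem can serve as a cross-check.

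The only delicate point is the vanishing of \(c_{p^e}(M)\) for \(e\ge2\) and \(p\nmid M\). Rather than relying on the quoted Möbius identity, I would verify it directly: the units modulo \(p^e\) are all residues minus the multiples of \(p\). The sum of \(\zeta_{p^e}^{Mu}\) over all \(u\) vanishes because \(p^e\nmid M\). The sum over multiples of \(p\) is a full character sum modulo \(p^{e-1}\), which vanishes because \(p^{e-1}\nmid M\) when \(e\ge2\).
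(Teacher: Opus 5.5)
Your proposal is correct and follows the paper's proof essentially step for step. You apply Proposition~\ref{prop:general-orbit-ramanujan} with \(q=p^{N-s}\), use \(c_p(M)=-1\) and \(c_{p^e}(M)=0\) for \(e\ge2\), \(p\nmid M\), and sum the orbit contributions via \(p+\sum_{m=2}^{N}\varphi(p^m)=p^N\). Your direct verification of the vanishing of \(c_{p^e}(M)\) is a small but welcome addition, since the paper asserts that step without justification.
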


\begin{proof}
For \(s=N-1\), the quotient modulus in Proposition~\ref{prop:general-orbit-ramanujan} is \(p\). Since \(p>B\),
\[
c_p(0)=p-1,
\qquad
c_p(M)=-1\quad(M\ne0),
\]
and hence
\[
T^{(p^N;p^{N-1})}_{\mathbf r,\mathbf h}(\boldsymbol\alpha_{p^N})
=p\mathsf S_0-2^H.
\]

Now let \(0\le s\le N-2\). The quotient modulus is \(p^{N-s}\) with exponent at least \(2\). If \(M\ne0\), then \(|M|\le B<p\), so \(p\nmid M\); consequently
\[
c_{p^{N-s}}(M)=0.
\]
For \(M=0\),
\[
c_{p^{N-s}}(0)=\varphi(p^{N-s}).
\]
Proposition~\ref{prop:general-orbit-ramanujan} therefore gives
\[
T^{(p^N;p^s)}_{\mathbf r,\mathbf h}(\boldsymbol\alpha_{p^N})
=\varphi(p^{N-s})\mathsf S_0
\qquad(0\le s\le N-2).
\]
Finally,
\[
p+\sum_{s=0}^{N-2}\varphi(p^{N-s})
=p+\sum_{m=2}^{N}(p^m-p^{m-1})
=p^N,
\]
so summing the orbit contributions yields
\[
T^{(p^N)}_{\mathbf r,\mathbf h}(\boldsymbol\alpha_{p^N})
=p^N\mathsf S_0-2^H.
\]
\end{proof}

\begin{remark}
\label{rem:prime-power-interpretation}
The prime-power refinement shows that the constant term \(-2^H\) arises entirely from the deepest boundary orbit \(\mathcal O_{p^{N-1}}(p^N)\), which reduces to the prime-level case. All inner orbits contribute only zero-frequency mass. Thus, although the total collapse formula is uniform in the stable range, the internal orbitwise distribution of the contribution depends sensitively on the arithmetic shape of the level.
\end{remark}

\section{Conclusion and outlook}
\label{sec:conclusion}

The results proved in Sections~\ref{sec:summability-Gn}--\ref{sec:pattern-collapse} clarify, in a precise sense, how far one can move beyond the fully symmetric framework of \cite{VelezCadavidPartI} while retaining summability phenomena at cyclotomic cosine points.

The first conclusion is negative but decisive. Within the compatible-family framework, summability by a rational function of the level forces eventual rationality of the evaluated values, hence eventual value-level \(G_n\)-invariance. However, this arithmetic shadow is far too weak to imply any rigidity. Indeed, every rational-valued sequence can occur as the cosine-point evaluation of a compatible family of total degree at most two. Thus no stable-range theorem beyond symmetry can be recovered merely from value-level Galois invariance, even under a uniform quadratic degree bound.

The second conclusion is positive and substantially stronger than the prime-tail and semiprime statements that initially motivated the investigation. For the explicit orbit-structured pattern class introduced in Section~\ref{sec:Gp-patterns}, the total evaluated sum satisfies a stable-range all-level collapse formula at every sufficiently large admissible level \(n\):
\[
T^{(n)}_{\mathbf r,\mathbf h}(\alpha_{1,n},\dots,\alpha_{n-1,n})
=
n\,\mathsf S_0(\mathbf r,\mathbf h)-2^{|\mathbf h|}.
\]
Thus the positive theory is not confined to primes or to a few special factorizations of the level. Rather, for this concrete non-symmetric class, eventual affine behavior persists uniformly across all sufficiently large levels for which the defining multipliers remain invertible modulo \(n\).

As a consequence, any family uniformly generated from finitely many such pattern sums through a single polynomial recipe has eventually polynomial evaluation at all sufficiently large admissible levels. In this sense, the paper establishes a genuine non-symmetric stable-range phenomenon: although full permutation symmetry is absent, the total contribution is still governed by a finite zero-frequency invariant together with a universal constant term.

The paper also gives an exact formula for every gcd-orbit in terms of Ramanujan sums. The prime, semiprime, and prime-power cases remain mathematically meaningful after the all-level theorem is proved, but their role changes: they become especially transparent specializations of the general orbitwise formula rather than separate main theorems.

At semiprime levels \(n=pq\), the total collapse decomposes into three orbit contributions. The two non-unit orbits reduce to prime-level sums, while the unit orbit is governed by a Ramanujan-sum contribution over \((\Z/pq\Z)^\ast\). At prime-power levels \(n=p^N\), the punctured index set breaks into a tower of gcd-orbits, with the deepest boundary orbit contributing the unique constant term \(-2^{|\mathbf h|}\) and the inner orbits contributing only zero-frequency mass. These refinements show that, although the total collapse theorem is uniform, the internal orbitwise arithmetic still depends strongly on the factorization type of the level.

\subsection{Beyond the total collapse formula}

For the pattern class studied here, both the total sum and every individual gcd-orbit are now explicit: the former collapses in the stable range, while the latter is governed by Proposition~\ref{prop:general-orbit-ramanujan}. The remaining problem is therefore genuinely broader. One would like to understand for which larger classes of orbit-structured families comparable uniform control of the separate orbit contributions is possible.

For a general composite integer \(n\), the punctured index set decomposes as
\[
\Omega_n=\{1,\dots,n-1\}
=
\bigsqcup_{\substack{d\mid n\\ d<n}}\mathcal O_d(n),
\qquad
\mathcal O_d(n)=\{\,k\in\Omega_n:\gcd(k,n)=d\,\}.
\]
At bounded degree, any orbit-structured theory should naturally involve orbitwise quantities such as
\[
P_{h,d}(n)
=
\sum_{k\in\mathcal O_d(n)}(2\alpha_{k,n})^h,
\qquad
\alpha_{k,n}=\cos\!\Bigl(\frac{2\pi k}{n}\Bigr).
\]
The symmetric theory of \cite{VelezCadavidPartI} bypasses these objects because full symmetry erases orbit distinctions. The present paper bypasses them at the level of the total pattern sum because the stable-range frequency argument collapses all nonzero modes uniformly; for the explicit pattern class, Proposition~\ref{prop:general-orbit-ramanujan} also resolves the individual gcd-orbits in terms of Ramanujan sums. What remains open is an orbit-elimination theory for substantially broader structural classes. The semiprime and prime-power refinements illustrate how differently individual orbit types may contribute even when the total answer is simple.

\subsection{Twists and rational extensions}

A second natural direction is to incorporate twists. In place of the untwisted pattern sums considered here, one may study character-weighted expressions such as
\[
\sum_{k=1}^{n-1}\chi(k)\prod_{j=1}^{t}(2\alpha_{\sigma_{r_j}(k),n})^{h_j},
\]
where \(\chi\) is a Dirichlet character or another natural multiplicative weight. From the viewpoint of the present paper, such twists break the uniform collapse of the nonzero modes and lead naturally to Fourier-weighted or isotypic orbit sums.

A third direction is to enlarge the class of admissible formulas from polynomials to rational functions. The symmetric theory of \cite{VelezCadavidPartI} yields eventual polynomiality for bounded-degree symmetric polynomial families, while the present paper shows that an explicit non-symmetric orbit-structured class still satisfies an all-level stable-range collapse theorem. It is natural to ask for an eventual rational analogue of these results when the input family is rational rather than polynomial, under suitable nonvanishing hypotheses on the denominator along the cyclotomic cosine grid.

We do not pursue these directions here. The role of the present paper is more modest and foundational: it identifies a necessary arithmetic shadow of summability, proves that this shadow does not characterize summability, and establishes a broad stable-range collapse mechanism for an explicit class of non-symmetric orbit-structured formulas, together with an exact Ramanujan-sum description of its gcd-orbits. In this sense, the paper should be viewed as a first step from the fully symmetric inverse-limit world of \cite{VelezCadavidPartI} toward a broader orbit-theoretic theory in which the model class is understood both globally and orbitwise, while extensions to wider structural classes remain to be developed.

\section*{Acknowledgments}
Carlos A. Cadavid gratefully acknowledges the financial support of Universidad EAFIT (Colombia) for the project \emph{Study and Applications of Diffusion Processes of Importance in Health and Computation} (project code 11740052022). Juan D. V\'elez gratefully acknowledges the Universidad Nacional de Colombia for its support during this research.

\section*{Statements and Declarations}
\noindent\textbf{Funding.} Carlos A. Cadavid received financial support from Universidad EAFIT (Colombia) for the project \emph{Study and Applications of Diffusion Processes of Importance in Health and Computation} (project code 11740052022). Juan D. V\'elez received support from Universidad Nacional de Colombia during this research.

\medskip
\noindent\textbf{Competing interests.} The authors declare that they have no competing interests.

\medskip
\noindent\textbf{Data availability.} No datasets were generated or analyzed during the current study.

\end{document}